\DeclareMathOperator{\End}{End}
\newtheorem{Thm}{Theorem}[section]
\newtheorem{Pro}[Thm]{Proposition}
\newtheorem{Lem}[Thm]{Lemma}
\newtheorem{Cor}[Thm]{Corollary}
\newtheorem{Def-Pro}[Thm]{Definition-Proposition}
\newtheorem{Def}[Thm]{Definition}
\theoremstyle{definition}
\newtheorem{Ex}[Thm]{Example}
\newtheorem{Rm}[Thm]{Remark}
\begin{document}
\title{The Atiyah class of generalized holomorphic vector bundles}

\author{ \vspace{2mm}  Honglei Lang$^1$, \, Xiao Jia$^2$\, and \, Zhangju Liu$^3$ \\ \vspace{2mm}
\it{$^1$College of Science, China Agricultural University, Beijing, 100873, China}\\ \vspace{2mm}
\it{$^2$School of Mathematics, Peking University, Beijing, 100871, China}\\ \vspace{2mm}
\it{$^3$School of Mathematical and Statistical Sciences, Henan University, Kaifeng, 475004, China}\\ \vspace{2mm}hllang@cau.edu.cn,~~jiaxiao@pku.edu.cn,~~zhangju@henu.edu.cn}

\date{ }
\footnotetext{{\it{Keywords}}:~Atiyah class, generalized complex structure.}
\footnotetext{The research is supported by NSFC (12171006, 12071241).}
\maketitle

\makeatletter
\newif\if@borderstar
\def\bordermatrix{\@ifnextchar*{%
\@borderstartrue\@bordermatrix@i}{\@borderstarfalse\@bordermatrix@i*}%
}
\def\@bordermatrix@i*{\@ifnextchar[{\@bordermatrix@ii}{\@bordermatrix@ii[()]}}
\def\@bordermatrix@ii[#1]#2{%
\begingroup
\m@th\@tempdima8.75\p @\setbox\z@\vbox{%
\def\cr{\crcr\noalign{\kern 2\p@\global\let\cr\endline }}%
\ialign {$##$\hfil\kern 2\p@\kern\@tempdima & \thinspace %
\hfil $##$\hfil && \quad\hfil $##$\hfil\crcr\omit\strut %
\hfil\crcr\noalign{\kern -\baselineskip}#2\crcr\omit %
\strut\cr}}%
\setbox\tw@\vbox{\unvcopy\z@\global\setbox\@ne\lastbox}%
\setbox\tw@\hbox{\unhbox\@ne\unskip\global\setbox\@ne\lastbox}%
\setbox\tw@\hbox{%
$\kern\wd\@ne\kern -\@tempdima\left\@firstoftwo#1%
\if@borderstar\kern2pt\else\kern -\wd\@ne\fi%
\global\setbox\@ne\vbox{\box\@ne\if@borderstar\else\kern 2\p@\fi}%
\vcenter{\if@borderstar\else\kern -\ht\@ne\fi%
\unvbox\z@\kern-\if@borderstar2\fi\baselineskip}%
\if@borderstar\kern-2\@tempdima\kern2\p@\else\,\fi\right\@secondoftwo#1 $%
}\null \;\vbox{\kern\ht\@ne\box\tw@}%
\endgroup
}
\makeatother

\abstract
We introduce the notion of Atiyah class of a generalized holomorphic
vector bundle, which captures the obstruction to the existence of generalized holomorphic connections on the bundle.
As in the classical holomorphic case, this Atiyah class can be defined
in three different ways: using \v{C}ech cohomology,
using the first-jet short exact sequence,
or adopting the Lie pair point of view.

\tableofcontents
\section{Introduction}
The Atiyah class of a holomorphic vector bundle $E$ over a complex manifold $M$ was introduced by Atiyah in \cite{Atiyah}. It vanishes if and only if there exists a globally defined holomorphic connection on the bundle --- 
holomorphic connections always exist locally. Attempting to glue local holomorphic connections so as to obtain a global holomorphic connection gives rise to a 1-cocycle and then a cohomology class in the \v{C}ech cohomology group $\mathrm{H}^1(M,\Omega_M\otimes\End(E))$, where $\Omega_M$ is the holomorphic cotangent bundle of $M$; see \cite{H}. Equivalently,  the Atiyah class can be obtained by means of the first-jet bundle. Let $\mathfrak{J}^1 E$ be the vector bundle of first-jets of holomorphic sections of $E$. One gets  a short exact sequence 
\[0\to \Omega_M\otimes E\to \mathfrak{J}^1 E\to E\to 0,\]
of holomorphic vector bundles. The Atiyah class of $E$ is defined as the extension class of this exact sequence. This Atiyah class plays a crucial role in the Rozansky-Witten theory \cite{Kap99} and Kontsevich's work on deformation quantization \cite{K03}. Recently, in \cite{CSX}, Chen, Sti\'{e}non and Xu introduced the notion of the Atiyah class for a Lie pair, i.e., a Lie algebroid with a Lie subalgebroid. Besides the original situation, such Atiyah classes also include the Molino class for a regular foliation of a manifold, which measures the projectability of connections transversal to this foliation \cite{Mo71}, and the Atiyah class of a Lie algebra pair, which characterizes the existence of an invariant connection on a homogeneous space \cite{N}.
Moreover, the Atiyah class of commutative dg algebras  and the twisted version  were studied in \cite{CLX}.  
We also refer to \cite{CLangX, Hong, MSX} for other versions of Atiyah classes in different settings.

In this paper, we shall construct the Atiyah class of a generalized holomorphic vector bundle over a generalized complex manifold.
Generalized complex structures were discovered by Hitchin \cite{Hitchin} and further studied by Gualtieri \cite{Gualtieri}; see also \cite{Park}.   
Complex structures and symplectic structures are particular instances
of generalized complex structures.
Another important class of generalized complex structures is holomorphic Poisson structures.

Initially, generalized complex structures were defined as endomorphisms
of the generalized tangent bundle $\mathcal{T}:=TM\oplus T^*M$ rather
than the tangent bundle $TM$ of a manifold. The generalized tangent bundle
of a manifold is equipped with a canonical nondegenerate symmetric bilinear form and a canonical bracket called Courant bracket. 
A Dirac structure in $\mathcal{T}$ is a maximal isotropic integrable subbundle of $\mathcal{T}$.
A generalized complex structure is an endomorphism $\mathcal{J}:\mathcal{T}\to\mathcal{T}$ satisfying
$\mathcal{J}^2=-1$ whose $\pm i$-eigenbundles are Dirac structures
in $\mathcal{T}\otimes \mathbb{C}$.
Being a Dirac structure, the $-i$-eigenbundle $L_-$ of a generalized 
complex structure $\mathcal{J}$ is automatically a Lie algebroid. 
A vector bundle $E$ over a generalized complex manifold $(M,\mathcal{J})$ 
is called a generalized holomorphic vector bundle if it admits
a flat $L_-$-connection; see \cite{Gualtieri, Hitchin3}.
In particular, a generalized holomorphic vector bundle over a
\emph{complex} manifold $M$
is an ordinary holomorphic vector bundle $E\to M$
together with a holomorphic section of $T^{1,0}M\otimes\End(E)$,
i.e.\ a co-Higgs bundle \cite{R}.
On the other hand, a generalized holomorphic vector bundle over a
\emph{symplectic} manifold $M$ is a complex vector bundle $E\to M$
endowed with a flat connection.
Furthermore, a generalized holomorphic vector bundle over a holomorphic
Poisson manifold $M$ is a holomorphic vector bundle $E\to M$
carrying a Poisson module structure \cite{G,Hitchin4}. 

Notice that the total space of a generalized holomorphic vector bundle is not necessarily a generalized complex manifold. The obstruction was discussed in \cite{W1}. We would like to restrict our attention to the category of generalized complex manifolds. So we approach the notion of generalized holomorphic vector bundles from a geometric point of view
and propose a definition different from what can be found in the existing
literature --- see Definition~\ref{Definition3.1}. 
It turns out that our definition is a special case of Gualtieri's
--- see \cite{Gualtieri} ---
in the sense that the associated flat $L_-$-connection on $E$
is completely determined by the differential of the Lie algebroid $L_-$.
For simplicity, we use the same terminology for our definition. 
Furthermore, the generalized holomorphic tangent (cotangent) bundle
and the first-jet bundle are introduced, as examples. 
 
Following the idea of defining the Atiyah class of a holomorphic vector 
bundle, we introduce the Atiyah class of a generalized holomorphic
vector bundle, which captures the obstruction to the existence of a generalized holomorphic connection on it, in three different ways.
In this paper the Atiyah class is constructed only  for generalized holomorphic vector bundles of our definition, not Gualtieri's. 
To address the general case, one needs to define the Atiyah class
of an $L_-$-module relatively to the Manin pair $(TM\oplus T^*M,L_-)$.
Here a Manin pair is a Courant algebroid with a Dirac structure;  see \cite{LWX}. A tentative step has been considered in \cite{BP} to define the Atiyah class of Manin pairs.

Here is an outline of the present paper.
In Section $2$, we recall the essentials of generalized complex structures 
and clarify the definitions of generalized holomorphic maps and functions.
In section $3$, we propose a definition of generalized holomorphic vector 
bundles and provide several examples, including the first-jet bundle.
In Section $4$, we define the Atiyah class of a generalized holomorphic 
vector bundle in three different ways and prove that it is the obstruction 
to the existence of a generalized holomorphic connection. 

\section{Generalized complex manifolds}
\subsection{Generalized holomorphic functions}
 We first recall the generalized complex geometry following \cite{Gualtieri}.
 Let $M$ be a real $n$-dimensional smooth manifold. The direct sum $TM\oplus T^*M$ of its tangent and cotangent bundles  is equipped with a canonical bilinear form which takes values in $C^\infty(M)$:
 \begin{equation}\label{pairing}
( X+\xi,Y+\eta) =\xi(Y)+\eta(X),\qquad \forall X,Y\in \mathfrak{X}(M),\xi,\eta\in \Omega^1(M),
\end{equation} 
and a skew-symmetric bracket, called the \emph {Courant bracket}:
 \begin{eqnarray}\label{CB}
 [X+\xi,Y+\eta]:=[X,Y]+\mathcal{L}_X \eta-\mathcal{L}_Y \xi-\frac{1}{2}d(\eta(X)-\xi(Y)).
 \end{eqnarray}
The Jacobi identity of this bracket fails, but it is controlled by a coboundary:
\[[[e_1,e_2],e_3]+c.p.=\frac{1}{6}d(( [e_1,e_2],e_3)+( [e_2,e_3],e_1)+( [e_3,e_1],e_2)),\]
for $e_1,e_2,e_3\in \Gamma(TM\oplus T^*M)$, where $d$ is the de Rham differential on $M$.

This structure  leads to the definition of a Courant algebroid structure on a vector bundle, which was introduced and well studied in \cite{LWX}. It plays the role of the Manin triple for a Lie bialgebroid, which is a pair $(A,A^*)$ of Lie algebroids such that \[d_A[\xi,\eta]_*=[d_A \xi,\eta]_*+[\xi,d_A \eta]_*,\qquad \forall \xi,\eta\in \Gamma(A^*),\] where $d_A:\Gamma(\wedge^\bullet A^*)\to \Gamma(\wedge^{\bullet+1} A^*)$ is the Lie algebroid differential of $A$ and $[\cdot,\cdot]_*$ is the Lie bracket on $A^*$.
A \emph{ Dirac structure} of the Courant algebroid $TM\oplus T^*M$ is a maximal isotropic subbundle $A$ which is closed with respect to  Courant bracket \eqref{CB}. Obviously, a Dirac structure is a Lie algebroid. 
\begin{Def} \label{defgcg}$(\emph{\cite{Gualtieri}})$
A \emph{ generalized complex structure} on $M$ is an endomorphism $\mathcal{J}:TM\oplus T^*M \to TM\oplus T^*M$ satisfying that 
\begin{itemize}
\item[\rm(1)] $\mathcal{J}^2=-1$;
\item[\rm(2)] $(\mathcal{J}e_1,\mathcal{J}e_2)=(e_1,e_2)$, for $e_1,e_2\in \Gamma(TM\oplus T^*M)$;
\item[\rm(3)] $\mathcal{J}$ is integrable; namely, the $\pm i$-eigenbundle $L_\pm\subset (TM\oplus T^*M)\otimes \mathbbm{C}$ of $\mathcal{J}$ are closed under Courant bracket \eqref{CB}.
\end{itemize}
A manifold $M$ with a generalized complex structure $\mathcal{J}$ is called a \emph{ generalized complex manifold}.  
\end{Def}
A differential homeomorphism $f:(M,\mathcal{J}_M)\to (N,\mathcal{J}_N)$ between two generalized complex manifolds is called a  \emph{ generalized holomorphic homeomorphism}   if  \begin{equation}\label{ghh}
\begin{pmatrix} f_* & 0 \\0& (f^{-1})^* \end{pmatrix}\circ \mathcal{J}_M=\mathcal{J}_N\circ 
\begin{pmatrix} f_* & 0 \\0& (f^{-1})^* \end{pmatrix}.
\end{equation}
Since we have both $TM$ and $T^*M$ at the same time, it is quite involved to define generalized holomorphic maps; see  \cite{Crainic, Ornea}.

Note that $L_-=\overline{L_+}$. A  generalized complex structure is in fact a complex Dirac structure $L_+$ in $T_{\mathbbm{C}} M\oplus T_{\mathbbm{C}}^* M$ satisfying that $L_+\cap \overline{L_+}=\{0\}$.  
 It is seen that $L_+$ and $L_-$ are both Lie algebroids on $M$.  We then have two Lie algebroid differentials
 \begin{eqnarray}\label{d+-}
 d_+:\Gamma(\wedge^\bullet L_+^*)\to \Gamma(\wedge^{\bullet+1} L_+^*),\qquad d_{-}:\Gamma(\wedge^\bullet L_{-}^*)\to \Gamma(\wedge^{\bullet+1} L_{-}^*).
 \end{eqnarray}
 In particular, for $c\in \Gamma(\wedge^k L_-^*)$, we have 
 \begin{eqnarray*}
 d_-(c)(u_1,\cdots,u_{k+1})&=&\sum_{i=1}^{k+1}(-1)^{i+1}\rho(u_i)c(u_1,\cdots,\hat{u_i},\cdots,u_{k+1})\\ &&+\sum_{i<j}(-1)^{i+j} 
 c([u_i,u_j],u_1,\cdots,\hat{u_i},\cdots,\hat{u_j},\cdots,u_{k+1}),
 \end{eqnarray*}
 for $u_i\in \Gamma(L_-)$, where $\rho$ and $[\cdot,\cdot]$ are the anchor and Lie bracket of $L_-$.
 Note that $L_+^*\cong L_{-}$ with respect to the pairing \eqref{pairing} on $T_{\mathbbm{C}} M\oplus T_{\mathbbm{C}}^* M=L_+\oplus L_{-}$. By \cite[Theorem 2.6]{LWX}, two transversal Dirac structures of a Courant algebroid constitute a Lie bialgebroid. 
So the pair $(L_+,L_-)$ is a Lie bialgebroid. For any $f\in C^\infty(M)$, we have $df=d_+f+d_-f$, 
where  $d_+,d_-$ are defined in \eqref{d+-} and $d$ is the de Rham differential on $M$. Furthermore, there is  a Poisson structure on $M$ given by 
\begin{eqnarray}\label{Poisson}
\{f,g\}_M=(d_{+} f,d_{-} g),\qquad \forall f,g\in C^\infty(M),
\end{eqnarray}
where $(\cdot,\cdot)$ is the pairing in \eqref{pairing}. 
 
A  transformation $\mathcal{J}$ on $TM\oplus T^*M$ satisfying $\mathrm{(1)}$ and $\mathrm{(2)}$ in Definition \ref{defgcg} (or, $\mathcal{J}^*=-\mathcal{J}$) can be written  in block form as 
\begin{equation}\label{matrixj}
\mathcal{J}=\begin{pmatrix} J & \beta \\ B& -J^* \end{pmatrix},\qquad J\in \End(TM), B\in \Omega^2(M), \beta\in \mathfrak{X}^2(M).
\end{equation}
 A \emph{ $B$-field transformation}  (or, $B$-transform) is  defined to be an automorphism of $TM\oplus T^*M$ given by a $2$-form $B\in \Omega^2(M)$ via:  
 \[X+\xi\mapsto X+\xi+\iota_X B,\qquad \forall X\in \mathfrak{X}(M),\xi\in \Omega^1(M).\]
 One key characteristic of generalized complex geometry is that its symmetry group is given by the semi-direct product of the diffeomorphism group with closed $2$-forms: $\mathrm{Diff}(M)\ltimes\Omega^2_{\mathrm{cl}}(M)$.

\begin{Def}
A function $f\in C^\infty(M)$ on a generalized complex manifold $(M,\mathcal{J})$ is called a \emph{generalized holomorphic function} if  $d_{-} f=0$.
\end{Def}
\begin{Rm}\label{2remark}
\begin{itemize}
\item[\rm (i)] This definition is reasonable as the generalized holomorphic property of a function is invariant under the $B$-transform. Suppose $d_{-} f=X+\xi=0$. Under a $B$-transform $B\in \Omega^2(M)$, the Dirac structure $L_{-}$ is changed to $B(L_{-})$, and the corresponding differential becomes
\[d_{B(L_{-})} f=X+\xi+\iota_X B.\]
Therefore, $d_- f=0$ implies $d_{B(L_{-})} f=0$.  Observe that generalized holomorphic functions are Casimir functions for the Poisson bracket given by \eqref{Poisson}.
\item[\rm (ii)] Denote by $\mathcal{O}_M$ the ring of local generalized holomorphic functions on $M$. By definition, $\mathcal{O}_M=\ker d_-\subset C^\infty(M)$ is a subsheaf of the sheaf of smooth functions on $M$, since $d_-: C^\infty(U)\to 
\Gamma(L^*_-|_U)$, for any open subset $U\subset M$, is a presheaf homomorphism. 
\end{itemize}
\end{Rm}

 Complex and symplectic structures are two extreme cases of generalized complex structures. 
 \begin{Ex}\label{c}
  Let $J:TM\to TM$ be a bundle map. The endomorphism of $T_{\mathbbm{C}} M\oplus T_{\mathbbm{C}}^* M$ given by
\[\mathcal{J}_J=\begin{pmatrix} J & 0 \\0& -J^* \end{pmatrix}\]
is a generalized complex structure on $M$ if and only if $J$ is a complex structure on $M$. Actually, note that $\mathcal{J}_J^2=-1$ if and only if $J^2=-1$. Also, the  $\pm i$-eigenbundles with respect to $\mathcal{J}_J$ is \[(L_+)_J=T^{1,0} M\oplus (T^* M)^{0,1},\qquad (L_-)_J=T^{0,1} M\oplus (T^* M)^{1,0}\] which are integrable if and only if $J$ is integrable.  Moreover, the Lie algebroid differentials are respectively 
\[d_{-}=\overline{\partial}:\Gamma(\wedge^\bullet (L_+)_J)\to \Gamma(\wedge^{\bullet+1} (L_+)_J), \qquad d_+=\partial:\Gamma(\wedge^\bullet (L_-)_J)\to \Gamma(\wedge^{\bullet+1} (L_-)_J).\]
Thus $f\in C^\infty(M)$ is a generalized holomorphic function if and only if it is a holomorphic function. In this case, $\mathcal{O}_M$ is the sheaf of holomorphic functions.
 \end{Ex}
 \begin{Ex}\label{s}Let $\omega\in \Omega^2(M)$ be a $2$-form.
 Consider the endomorphism
\[\mathcal{J}_\omega=\begin{pmatrix} 0 & -\omega^{-1}\\ \omega & 0 \end{pmatrix}.\]
The $\pm i$-eigenbundles 
\[(L_+)_\omega=\{X-i\omega(X)| X\in T_{\mathbbm{C}} M\},\qquad (L_-)_\omega=\{X+i\omega(X)| X\in T_{\mathbbm{C}} M\}.\]
are integrable if and only if $d\omega=0$, where $d$ is the de Rham differential. Suppose that $\omega$ is symplectic. Then we have
\[d_{-}=d:\Gamma(\wedge^\bullet (L_+)_\omega)\to \Gamma(\wedge^{\bullet+1} (L_+)_\omega),\qquad d_+=0: \Gamma(\wedge^\bullet (L_-)_\omega)\to \Gamma(\wedge^{\bullet+1} (L_-)_\omega)\]
where  we have used the identifications $(L_+)_\omega \cong T^*M$ and $(L_-)_\omega \cong TM$. So a function $f\in C^\infty(M)$ is generalized holomorphic if it is a constant. In this case, $\mathcal{O}_M$ is the constant sheaf. \end{Ex}
 
Holomorphic Poisson structures provide another type of  generalized complex structures.  A \emph{ holomorphic Poisson structure} on a complex manifold $M$ is a bivector field $\pi\in \Gamma(\wedge^2 T^{1,0} M)$ such that \[\overline{\partial} \pi=0,\qquad [\pi,\pi]=0.\]
\begin{Ex}\label{holo Poisson}
Suppose that $M$ is a complex manifold with a complex structure $J\in \End(TM)$. If there is a bivector field $\beta\in \mathfrak{X}^2(M)$ on $M$ such that
\begin{equation*}
\mathcal{J}=\begin{pmatrix} J & \beta \\0& -J^* \end{pmatrix}\end{equation*}
is a generalized complex structure on $M$, then $\pi=J\circ \beta+i\beta: T^*_{\mathbbm{C}} M\to T_{\mathbbm{C}} M$
 gives a holomorphic Poisson structure on $M$; see \cite[Theorem 2.7]{LSX}.
Note that $\mathcal{J}^2=-1$ implies that $J\circ \beta=\beta\circ J^*$. 
The $\pm i$-eigenbundles corresponding to $\mathcal{J}$ are
\[L_+=\{Y+\frac{\beta(\eta)}{2i}+\eta|Y\in T^{1,0} M,\eta\in (T^*M)^{0,1}\},\qquad L_-=\overline{L_+}.\]
Notice that \[\pi^\sharp(\xi)=(J\circ \beta+i\beta)(\xi)=(\beta\circ J^*+i\beta)(\xi)=2i\beta(\xi),\qquad \forall \xi\in \Omega^{1,0}(M).\]
For a function $f:X\to \mathbbm{C}$, we have $d_{-} f\in \Gamma(L_{-}^*)$, which is given by
\[(d_{-}f, X+\frac{i\beta(\xi)}{2}+\xi)=(X+\frac{i\beta(\xi)}{2})(f)=X(\overline{\partial} f)-\frac{1}{4}\pi^\sharp(\partial f)(\xi),\]
for $X\in \mathfrak{X}^{0,1}(M), \xi\in \Omega^{1,0}(M)$.
This implies that \[d_{-}f=\overline{\partial} f-\frac{1}{4}\pi^\sharp(\partial f).\] Hence, $f$ is a generalized holomorphic function if and only if it is a holomorphic Casimir function. Furthermore, we have $d_+ f=\partial{f}+\frac{1}{4}\pi^\sharp(\partial f)$ so that the Poisson bracket \eqref{Poisson} is
\begin{eqnarray}\label{twopoisson}
\{f,g\}_M=(d_+f, d_-g)=\frac{1}{4}\pi(\partial f,\partial g),
\end{eqnarray}
as $\pi\in \Gamma(\wedge^2 T^{1,0} M)$ is  holomorphic.
\end{Ex}

Let $L_+$ be the $+i$-eigenbundle of the generalized complex structure and let $\rho: T_{\mathbbm{C}} M\oplus T^*_{\mathbbm{C}} M\to T_{\mathbbm{C}} M$ be the projection. Define $E=\rho(L_+)$ and we have $E\cap \overline{E}=\Delta\otimes \mathbbm{C}$.  We call $m$ a  regular point of $M$ if the real dimension of $\Delta$ is constant in a neighborhood of $m\in M$. If every point in $M$ is regular, we call $M$ a  regular generalized complex manifold. 
The type of a generalized complex structure at a point $m$ is defined as the type of the associated Dirac structure $(L_+)_m$, i.e., the codimension of $E_m\subset T_{\mathbbm{C}} M$. 

We have the  generalized Darboux Theorem given by Gualtieri in \cite{Gualtieri}: At a regular point $m$ of type $k$ of a generalized complex manifold $(M,\mathcal{J})$, there exists an open neighborhood $U_m$, such that up to a $B$-transform, $U_m$ is diffeomorphic to the product of an open subset $V$ of $\mathbbm{C}^k$ and an open subset $W$ of $(\mathbbm{R}^{2n-2k},\omega_0)$. Namely, there exists $B_\varphi\in \Omega_{\mathrm{cl}}^2(V\times W)$ such that the following generalized holomorphic homeomorphism 
\begin{eqnarray*}\label{localtri}
\varphi: (U_m,\mathcal{J})\cong (V\times W,e^{B_{\varphi}}(\mathcal{J}_{J_0}\times \mathcal{J}_{\omega_0} )e^{-B_{\varphi}})
\end{eqnarray*}
holds, where $J_0$ is the canonical complex structure on $\mathbbm{C}^k$ and $\omega_0$ is the standard symplectic structure on $\mathbbm{R}^{2n-2k}$.
Take a coordinate system $z=(z_1,\cdots,z_k)$  of $\mathbbm{C}^k$ and a coordinate system $(p,q)=(p_1,\cdots,p_{n-k},q_1,\cdots,q_{n-k})$ of $\mathbbm{R}^{2n-2k}$. We call the corresponding local coordinates 
\begin{eqnarray}\label{canon}
(U_m,\varphi; z,p,q)=(U_m,\varphi; z_1,\cdots,z_k, p_1,\cdots,p_{n-k}, q_1,\cdots,q_{n-k})
\end{eqnarray} the \emph{canonical coordinates}.

\begin{Pro}\label{coordinates2}
Let $(M,\mathcal{J})$ be a regular generalized complex manifold. Suppose that  $(U,\varphi;z,p,q)$ and $(U',\psi;z',p',q')$
are two canonical coordinates as in \eqref{canon} and $U\cap U'\neq \varnothing$. Then the coordinate transformation $\phi:=\psi\circ \varphi^{-1}:\varphi(U\cap U')\to \psi(U\cap U')$ 
 satisfies that
\[\frac{\partial z'_\lambda}{\partial \bar{z}_\mu}=0,\qquad \frac{\partial z'_\lambda}{\partial p_\nu}
=\frac{\partial z'_\lambda}{\partial q_\nu}=0,\qquad\lambda,\mu=1,\cdots,k; \nu=1,\cdots,n-k.\]
\end{Pro}
\begin{proof}
By definition, $\phi$ is a generalized holomorphic homeomorphism as defined by \eqref{ghh}. So we have
\begin{eqnarray*}\label{ghh1}
\begin{pmatrix} \phi_* & 0 \\0& (\phi^{-1})^* \end{pmatrix}\circ e^{B_\varphi} \circ \begin{pmatrix} J & \beta \\B& -J^* \end{pmatrix}\circ e^{-B_\varphi}=e^{B_\psi} \circ \begin{pmatrix} J & \beta \\B& -J^* \end{pmatrix}\circ e^{-B_\psi}
\circ \begin{pmatrix} \phi_* & 0 \\0& (\phi^{-1})^* \end{pmatrix},
\end{eqnarray*}
where 
\[J=\begin{pmatrix} J_0 & 0 \\0& 0 \end{pmatrix},\qquad \beta=\begin{pmatrix} 0 & 0 \\0& -\omega_0^{-1} \end{pmatrix},\qquad B=\begin{pmatrix} 0 & 0 \\0& \omega_0 \end{pmatrix}.\]
By substituting the $B$-transforms 
\[e^{B_\varphi}=\begin{pmatrix} 1 & 0 \\B_\varphi& 1 \end{pmatrix},\qquad e^{B_\psi}=\begin{pmatrix} 1 & 0 \\B_\psi& 1 \end{pmatrix}\]
 into the above relation, we obtain
\[\begin{pmatrix} \phi_* & 0 \\0& (\phi^{-1})^* \end{pmatrix}\circ \begin{pmatrix} J-\beta\circ B_\varphi & \beta \\ *& B_\varphi\circ \beta-J^* \end{pmatrix}=\begin{pmatrix} J-\beta\circ B_\psi & \beta \\ *& B_\psi\circ \beta-J^* \end{pmatrix}
\circ \begin{pmatrix} \phi_* & 0 \\0& (\phi^{-1})^* \end{pmatrix},\]
which implies that 
\begin{eqnarray}\label{blockmat1}
\phi_* \circ (J-\beta\circ B_\varphi)&=&(J-\beta\circ B_\psi)\circ  \phi_*,\\ 
\label{blockmat2}\qquad \phi_*\circ \beta&=&\beta\circ (\phi^{-1})^*.
\end{eqnarray}
Write $\phi_*$ in the matrix form $\phi_*:=(\phi_{ij})_{2\times 2}$ since $\varphi(U\cap U')$ is a product manifold. Taking into consideration of $J,\beta$ and $B$, \eqref{blockmat2} implies that $\phi_{12}=0$ and  \eqref{blockmat1} implies that $\phi_{11}\circ J_0=J_0\circ \phi_{11}$. In local coordinates, 
this means exactly that  $z'_\lambda (z,p,q)$ does not depend on the components $p$ and $q$ and it is holomorphic with respect to $z$. We get the desired result. 
\end{proof}

\begin{Ex}\label{coordinates}
Let $(M,\mathcal{J})$ be a regular generalized complex manifold. For any $m\in M$ of type $k$, take the canonical coordinates $(z,p,q)$ as in \eqref{canon}.
 A function $f:M\to \mathbbm{C}$ is a generalized holomorphic function if and only if 
\[\frac{\partial f}{\partial \bar{z}_\lambda}=0,\qquad \frac{\partial f}{\partial p_\mu}=\frac{\partial f}{\partial q_\mu}=0,\qquad \lambda=1,\cdots,k; \mu=1,\cdots,n-k.\]
\end{Ex}

In general, recent works  by Abouzaid, Boyarchenko (\cite{AB}) and Bailey (\cite{Bailey}) state that around an arbitrary point, a generalized complex manifold is equivalent to a product of a symplectic manifold and a holomorphic Poisson manifold.

\subsection{Generalized holomorphic maps}

In this subsection, we show that a function $f:M\to \mathbb{C}$ is generalized holomorphic if and only if it is a 
generalized homomorphic map in the sense of \cite{Ornea}.
Let us first recall linear generalized complex maps between generalized complex vector spaces.

Let $V$ be a vector space and $E\subset V$ be a subspace and let $\varepsilon\in \wedge^2 E^*$. Denote by
\[L(E,\varepsilon)=\{u+\alpha\in V\oplus V^*|u\in E,\alpha|_E:=\varepsilon(u)\in E^*\}.\] It is well-known that any maximal isotropic subspace of $V\oplus V^*$ takes the form of $L(E,\varepsilon)$.
Let $(V,\mathcal{J}_V)$ be a generalized complex vector space and let \[L_V=L(E_V,\varepsilon) \subset  V_{\mathbbm{C}} \oplus V^*_{\mathbbm{C}},\qquad E_V=\rho_{V_{\mathbbm{C}}} (L_V),\qquad \varepsilon\in \wedge^2 E_V^*\] be its $+i$-eigenspace, where $E_V$ is the projection of $L_V$ to $V_{\mathbbm{C}}$. Then we know that $L_V$ is a maximal isotropic subspace such that $L_V\cap \overline{L_V}=\{0\}$. This is equivalent to say that   $E_V+ \overline{E_V}=V\otimes \mathbbm{C}$ and the imaginary part $\omega_\Delta=\mathrm{Im}(\varepsilon|_{E_V\cap \overline{E_V}})$ of $\varepsilon$ restricting on $E_V\cap \overline{E_V}$ is nondegenarate on $E_V\cap \overline{E_V}=\Delta\otimes \mathbbm{C}$. So $P_V:=L(E_V\cap \overline{E_V},\omega_\Delta)$ is a linear Poisson structure on $V_{\mathbbm{C}}$.  A \emph{ linear Poisson structure}  on $V_{\mathbbm{C}}$ is defined as \[L(V^*,\pi)=\{\pi(\alpha)+\alpha|\alpha\in V_{\mathbbm{C}}^*,\pi\in \wedge^2 V_{\mathbbm{C}}\}.\]
 
\begin{Def}\rm{(\cite{Ornea})} A linear map $\phi: (V,L_V)\to (W,L_W)$ between two generalized complex vector spaces is called a  \emph{ generalized complex map} if
\[\phi(E_V)\subset E_W,\qquad \phi_*(P_V)=P_W,\]
where $\phi_*$ denotes the push-forward of a Dirac structure, namely,
\[\phi_*(P_V)=\{\phi(u)+\alpha\in W_{\mathbbm{C}}\oplus W_{\mathbbm{C}}^*|, u+\phi^*(\alpha)\in P_V\}.\]
\end{Def}
It is simple to see that the generalized complex map defined in this way is invariant under $B$-transforms and the composition of two generalized complex maps is still a generalized complex map.

\begin{Ex}
Suppose that $P_V=L(V^*,\beta_V)$ and $P_W=L(W^*,\beta_W)$ are two  linear Poisson structures on $V$ and $W$ respectively, where $\beta_V\in \wedge^2 V_{\mathbbm{C}}$ and $\beta_W\in \wedge^2 W_{\mathbbm{C}}$. The linear map $\phi:V\to W$ satisfies $\phi_*(P_V)=P_W$ if and only if  $\phi(\beta_V)=\beta_W$. \end{Ex}

A smooth map $f:(M,\mathcal{J}_M)\to (N,\mathcal{J}_N)$ between two generalized complex manifolds is called a \emph{ generalized holomorphic map} if $f_{*m}:T_m M\to T_{f(m)}N$ is generalized complex linear at every point $m\in M$.
In particular, one recovers holomorphic maps between complex manifolds and Poisson maps between symplectic manifolds.
\begin{Rm}
Notice that  if $f$ is a generalized holomorphic homeomorphism as defined by \eqref{ghh}, then $f$ and $f^{-1}$ are both generalized holomorphic maps. The converse is not true. In fact, the equation (\ref{ghh}) means that the generalized complex structures on $M$ and $N$ are the same under a differential homeomorphism. However, a generalized holomorphic map is defined up to  $B$-transforms. 
\end{Rm}

We always use $J_0$ to denote the standard generalized complex structure  on $\mathbbm{C}$.
\begin{Lem}\label{equi def}
Let $(V,\mathcal{J})$ be a generalized complex vector space with 
\begin{equation*}
\mathcal{J}=
\begin{pmatrix} J & \beta \\B& -J^* \end{pmatrix},\qquad J\in \End(V), \beta\in \wedge^2 V, B\in \wedge^2 V^*.\end{equation*}
Denote $\mathbbm{C}=(\mathbbm{R}^2,J_0)$. Then $f:V\to \mathbbm{C}$ is a generalized complex map if and only if 
\begin{eqnarray}\label{equi condition}
f\circ J=J_0\circ f,\qquad f\circ \beta=0.
\end{eqnarray}
\end{Lem}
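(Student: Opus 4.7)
My plan is to translate both defining conditions of a generalized complex map---$f(E_V)\subset E_\mathbbm{C}$ and $f_*(P_V)=P_\mathbbm{C}$---into algebraic statements about $f$, $J$ and $\beta$, and then to verify that the Poisson condition is automatic once the first one is unpacked.

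First, I identify the structures on the target. Since $J_0$ is the standard complex structure on $\mathbbm{R}^2=\mathbbm{C}$, one has $E_\mathbbm{C}=\mathbbm{C}^{1,0}$ and $\Delta_\mathbbm{C}=0$, so $P_\mathbbm{C}=\{0\}\oplus(\mathbbm{C}^*)_\mathbbm{C}$. On the source side, a direct computation of $\mathcal{J}(0+\alpha)=i(0+\alpha)$ using the block form of $\mathcal{J}$ shows that $\alpha\in V^*_\mathbbm{C}$ lies in $L_V$ iff $\beta(\alpha)=0$ and $J^*\alpha=-i\alpha$. Since every maximal isotropic subspace is of the form $L(E,\varepsilon)$, the intersection $L_V\cap V^*_\mathbbm{C}$ is exactly $\mathrm{Ann}(E_V)$, so I obtain a clean description $\mathrm{Ann}(E_V)=\{\alpha\in V^*_\mathbbm{C}:\beta(\alpha)=0,\ J^*\alpha=-i\alpha\}$.

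Next, I handle $f(E_V)\subset E_\mathbbm{C}$. Writing $f=f_1 e_1+f_2 e_2$ with $f_j\in V^*$ and setting $\phi=f_1+if_2$, $\bar\phi=f_1-if_2\in V^*_\mathbbm{C}$, a short calculation in the complex basis $z=e_1-ie_2$, $\bar z=e_1+ie_2$ of $\mathbbm{R}^2_\mathbbm{C}$ gives $f_\mathbbm{C}=\tfrac12\phi\,z+\tfrac12\bar\phi\,\bar z$, whence $f_\mathbbm{C}(E_V)\subset\mathbbm{C}^{1,0}=\mathbbm{C}\cdot z$ precisely when $\bar\phi|_{E_V}=0$, i.e., $\bar\phi\in\mathrm{Ann}(E_V)$. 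By the previous paragraph this splits into $J^*\bar\phi=-i\bar\phi$ and $\beta(\bar\phi)=0$. The first unfolds directly into $f\circ J=J_0\circ f$. The second is the crucial real-versus-complex observation: since $\bar\phi=f_1-if_2$ with real $f_j\in V^*$ and $\beta$ is a real bivector, $\beta(\bar\phi)=\beta(f_1)-i\beta(f_2)\in V_\mathbbm{C}$ has both $\beta(f_j)\in V$ real, so its vanishing forces $\beta(f_1)=\beta(f_2)=0$, i.e., $f\circ\beta=0$.

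Finally, I check that $f_*(P_V)=P_\mathbbm{C}$ is automatic given $f\circ\beta=0$. The identification $\Delta_\mathbbm{C}=\mathrm{Im}(\beta)$---manifest when $B=0$ and invariant under $B$-transforms (which leave the $(1,2)$-block of $\mathcal{J}$ unchanged), hence valid in general via the generalized Darboux normal form recalled in the text---gives $f(\Delta_\mathbbm{C})=0$. Unwinding $f_*(P_V)=\{f(u)+\alpha:u\in\Delta_\mathbbm{C},\ f^*(\alpha)|_{\Delta_\mathbbm{C}}=\omega_\Delta(u,\cdot)\}$ and using the nondegeneracy of $\omega_\Delta$ on $\Delta_\mathbbm{C}$ to produce, for each $\alpha\in(\mathbbm{C}^*)_\mathbbm{C}$, a unique $u\in\Delta_\mathbbm{C}$ realizing the constraint, yields $f_*(P_V)=\{0\}\oplus(\mathbbm{C}^*)_\mathbbm{C}=P_\mathbbm{C}$. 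The one genuinely subtle step in the whole argument is the real-versus-complex manoeuvre showing that the single complex equation $\beta(\bar\phi)=0$ encodes the full condition $f\circ\beta=0$ and not merely a weaker part of it; the rest is bookkeeping.
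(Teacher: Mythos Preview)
Your proof is correct and takes a genuinely different route from the paper's. The paper immediately invokes $B$-transform invariance to pass to the split model $(V_1,\mathcal{J}_{J_1})\oplus(V_2,\mathcal{J}_{\omega_0})$ and reads off both conditions there in one stroke. You instead work directly with the block form of $\mathcal{J}$: computing $\mathrm{Ann}(E_V)=L_V\cap V_{\mathbbm{C}}^*=\{\alpha:\beta(\alpha)=0,\ J^*\alpha=-i\alpha\}$ and expressing $f$ via $\phi,\bar\phi$, you show that the single requirement $f(E_V)\subset\mathbbm{C}^{1,0}$ is already equivalent to \emph{both} relations $f\circ J=J_0\circ f$ and $f\circ\beta=0$, the real/complex splitting of $\beta(\bar\phi)=0$ being the key observation. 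Only afterwards do you verify that the Poisson condition $f_*(P_V)=P_{\mathbbm{C}}$ follows automatically from $f\circ\beta=0$. This has the pleasant by-product of showing that, for maps into $(\mathbbm{R}^2,J_0)$, the Poisson half of the definition of a generalized complex map is redundant---a fact not visible in the paper's argument. One small remark: your identification $\Delta_{\mathbbm{C}}=\mathrm{Im}(\beta)$ is not quite ``manifest when $B=0$'' in general; what you really use is the linear Darboux decomposition (complex $\times$ symplectic) together with the $B$-invariance of both sides, so the normal-form reduction is still present in your argument, merely postponed to the final and easiest step.
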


\begin{proof}
Since the definition of generalized complex maps is $B$-transform invariant, we suppose $(V,\mathcal{J})=(V_1,\mathcal{J}_{J_1})\oplus (V_2,\mathcal{J}_{\omega_0})$, where $V_1=\mathbbm{R}^{2k}$, $V_2=\mathbbm{R}^{2n-2k}$ and $(V_1,J_1)=\mathbbm{C}^k$. Then we have 
\[E_V=V_1^{1,0}\oplus (V_2)_{\mathbbm{C}},\]
and the Poisson bivector on $V$ is $\beta_V=\omega_0^{-1}$.
Thus $f$ is a generalized complex map if and only if 
\[f(V_1^{1,0}\oplus (V_2)_{\mathbbm{C}})\subset \mathbbm{C}^{1,0},\qquad f(\omega_0^{-1})=0.\]
Namely,
\begin{eqnarray}\label{equi condition1}
f\circ J_1(v_1)=J_0\circ f(v_1),\qquad f(v_2)=0,\qquad \forall v_1\in V_1,v_2\in V_2.
\end{eqnarray}
Notice that $\omega_0$ is nondegenerate and 
\begin{equation*}
J=
\begin{pmatrix} J_1 & 0 \\0& 0 \end{pmatrix},\qquad \beta=\begin{pmatrix} 0 & 0 \\ 0 & -\omega_0^{-1} \end{pmatrix}.
\end{equation*}
The formulas (\ref{equi condition1}) are equivalent to (\ref{equi condition}).
\end{proof}

\begin{Pro}\label{Pro 2.29}
Let $(M,\mathcal{J})$ be a generalized complex manifold with $\mathcal{J}$ given by \eqref{matrixj}
and let $L_{-}$ be the $-i$-eigenbundle of $\mathcal{J}$.  For a function $f:U\subset M\to \mathbbm{C}$, the following statements are equivalent:
\begin{itemize}
\item[\rm{(1)}]  $f$ is a generalized holomorphic function;
\item[\rm{(2)}]  $f: U\subset M\to \mathbbm{C}=(\mathbbm{R}^2,J_0)$ is a generalized holomorphic map;
\item[\rm{(3)}] $f_*\circ J=J_0\circ f_{*},\qquad f_{*}\circ \beta=0$;
\item[\rm{(4)}] $(d f)_m\in (L_{-}\cap T_{\mathbbm{C}}^*M)_m$ for any $m\in U$.
\end{itemize}
\end{Pro}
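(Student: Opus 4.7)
My plan is to prove the four conditions equivalent along the chain $(2)\Leftrightarrow(3)\Leftrightarrow(4)\Leftrightarrow(1)$, each step being a short unwinding of definitions.

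For $(2)\Leftrightarrow(3)$, I would invoke Lemma~\ref{equi def} pointwise. By definition $f:U\to\mathbbm{C}$ is a generalized holomorphic map precisely when each tangent map $f_{*p}:(T_pM,\mathcal{J}_p)\to(\mathbbm{R}^2,J_0)$ is a linear generalized complex map. Reading $J_p,\beta_p,B_p$ off the matrix form~\eqref{matrixj} and applying Lemma~\ref{equi def} at the point $p$ gives exactly $f_{*p}\circ J_p=J_0\circ f_{*p}$ and $f_{*p}\circ\beta_p=0$. Letting $p$ range over $U$ yields~(3).

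For $(3)\Leftrightarrow(4)$, I would apply $\mathcal{J}$ to a pure covector $0+\eta\in T^*_{\mathbbm{C}}M$; using the matrix form~\eqref{matrixj} directly,
\[
\mathcal{J}(0+\eta)=\beta(\eta)+(-J^*\eta),
\]
so $0+\eta\in L_-$ if and only if $\beta(\eta)=0$ and $J^*\eta=i\eta$. Specialising to $\eta=df$, and using that the $J_0$-action on $T\mathbbm{C}\cong\mathbbm{C}$ is multiplication by $i$, these two identities read respectively as $f_*\circ\beta=0$ (since $(f_*\circ\beta)(\xi)=df(\beta(\xi))$) and $f_*\circ J=J_0\circ f_*$ (which unpacks to $df(JX)=i\,df(X)$, i.e.\ $J^*\,df=i\,df$).

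For $(4)\Leftrightarrow(1)$, I would recall that the anchor of $L_-$ is $\rho(X+\xi)=X$ and that the Lie algebroid differential on functions is $d_-f(u)=\rho(u)(f)$. Thus $d_-f=0$ iff $df$ annihilates the image $E:=\rho(L_-)\subset T_{\mathbbm{C}}M$. On the other hand, writing the maximal isotropic $L_-=L(E,\varepsilon)$ as introduced above, the pure covector $0+df$ belongs to $L_-$ exactly when $df|_E=\varepsilon(0)=0$, which is the same annihilation condition; this establishes~(1)$\Leftrightarrow$(4).

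The only subtlety is bookkeeping of signs in the lower-right block $-J^*$ of~\eqref{matrixj} and the identification of $J_0$ with multiplication by $i$ on $T\mathbbm{C}$; no step here presents a substantive obstacle.
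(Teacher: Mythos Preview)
Your argument is correct and in fact a bit more streamlined than the paper's. Both you and the paper obtain $(2)\Leftrightarrow(3)$ by invoking Lemma~\ref{equi def} pointwise. The paper then closes the cycle by proving $(1)\Leftrightarrow(4)$ via the decomposition $df=d_+f+d_-f$ (with $d_+f\in L_-$, $d_-f\in L_+$), and separately proving $(1)\Leftrightarrow(3)$ by writing $d_-f=X_1+\xi_1$, $d_+f=X_2+\xi_2$ and solving the resulting eigenvector equations in the four unknowns. Your route replaces that last step by the direct observation that a pure covector $0+\eta$ lies in $L_-$ iff $\beta(\eta)=0$ and $J^*\eta=i\eta$, which for $\eta=df$ is exactly~(3); this gives $(3)\Leftrightarrow(4)$ in one line. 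Your $(1)\Leftrightarrow(4)$ via the $L(E,\varepsilon)$ description (so that $0+df\in L_-$ iff $df|_{\rho(L_-)}=0$ iff $d_-f=0$) is equivalent to but phrased differently from the paper's decomposition argument. The net effect is that you avoid the somewhat longer computation the paper carries out for $(3)\Rightarrow(1)$; the only care needed, as you note, is the sign in $-J^*$ and the identification of $J_0$ with multiplication by $i$, both of which you have handled correctly.
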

\begin{proof}
By Lemma \ref{equi def}, we see the equivalence of $(2)$ and $(3)$. The equivalence of $(1)$ and $(4)$ is easy to get. In fact, by definition, if $f$ is a generalized holomorphic function, we have $d_{-}f=0$. Thus $d f =d_{+}f\in \Gamma(L_{-}\cap T_{\mathbbm{C}}^*M)$. Conversely, if $d f \in \Gamma(L_{-}\cap T_{\mathbbm{C}}^*M)$, we must have $d_{-}f=0$. 

It is left to check the equivalence of $(1)$ and $(3)$. If $(1)$ holds, i.e., $d_{-}f=0$, then $df=d_{+} f\in L_{-}$.  Namely, $\mathcal{J}(df)=-idf$, which implies that $\beta(df)=0$ and $J^*(df)=idf$. By the fact that $df=f^*(dz)$, we have
\[(J^*\circ f^*)(dz)=J^*(df)=if^*(dz)=(f^*\circ J_0^*)(dz).\]
As $df\in L_{-}$, we have $d\bar{f}\in L_+$. Similarly, we get $(J^*\circ f^*)(d\bar{z})=(f^*\circ J_0^*)(d\bar{z})$. This proves $f_*\circ J=J_0\circ f_*$. Following from $\beta(df)=0$, we have $\beta(d\bar{f})=0$. Thus we get $f_*\circ \beta=0$ and then $(3)$.

Conversely, suppose that $(3)$ holds. Let $d_{-} f=X_1+\xi_1$ and $d_{+} f=X_2+\xi_2$ for $X_1,X_2\in \Gamma(T_{\mathbbm{C}} M)$ and $\xi_1,\xi_2\in \Gamma(T_{\mathbbm{C}}^*M)$. Then we have
$X_1+X_2=0$ and $\xi_1+\xi_2=df$. By the fact that $d_{-} f\in L_+$ and $d_{+}f\in L_{-}$, we get
\[JX_1+\beta\xi_1=iX_1,\quad BX_1-J^* \xi_1=i\xi_1;\qquad J X_2+\beta \xi_2=-iX_2,\quad BX_2-J^*\xi_2=-i\xi_2.\]
Since $f_*\circ \beta=0$, we have $\beta(df)=0$. Adding the first and third equations, we derive $X_1=0$. Then the rest two equations tell us that $J^*\xi_1=-i\xi_1$ and $J^*\xi_2=i\xi_2$. By the relation $f_*\circ J=J_0\circ f_*$, we find
\[J^*(df)=(J^*\circ f^*)(dz)=(f^*\circ J_0^*)(dz)=if^*(dz)=idf.\]
That is, $J^*(\xi_1+\xi_2)=i(\xi_1+\xi_2)$. We thus get $\xi_1=0$. This proves that $d_{-}f=0$. Namely, $f$ is a generalized holomorphic function.
\end{proof}

\begin{Rm}
Another kind of generalized holomorphic maps  between two generalized complex manifolds  was introduced by Crainic  in \cite{Crainic} as follows:  A smooth map $f: (M_1,\mathcal{J}_1)\to (M_2,\mathcal{J}_2)$  is called a \emph{ generalized holomorphic map} if it satisfies
\[f_*(\beta_1)=\beta_2, \quad f^*B_2=B_1, \quad f_*\circ J_1=J_2\circ f_*,\quad where \quad \mathcal{J}_i=\begin{pmatrix} J_i& \beta_i \\B_i & -J_i^*\end{pmatrix},i=1,2.\]
This definition looks more natural, but it is not $B$-transform invariant. So here we  adopt the definition of generalized holomorphic  maps proposed in \cite{Ornea}, which is $B$-transform invariant.
\end{Rm}

\section{Generalized holomorphic vector bundles}

\subsection{Generalized holomorphic vector bundles}
We shall introduce a notion called  generalized holomorphic vector bundles in the same manner as for  holomorphic vector bundles by using the local trivialization.  Although this definition is stronger than the one given by Gualtieri in \cite{Gualtieri}, it has its own advantages as we shall see soon.
\begin{Def}\label{Definition3.1}
Suppose that  $M$ is a generalized complex manifold. A real vector bundle $\pi:E\to M$ is called a \emph{ generalized holomorphic vector bundle}, if 
\begin{itemize}
\item[\rm{(1)}] $E$ is a generalized complex manifold;
\item[\rm{(2)}] there is an open cover $\{U_i\}_{i\in I}$ of $M$ and a family of local trivializations of the vector bundle $E$ \[\{\varphi_i: E|_{U_i}=\pi^{-1}(U_i)\to U_i\times \mathbbm{C}^r\}_{i\in I}\] satisfying that
$\varphi_i$ for each $i$ is a  generalized holomorphic homeomorphism, where $U_i\times \mathbbm{C}^r$ is associated with the standard product generalized complex structure.
\end{itemize}
\end{Def}

\begin{Pro}\label{Explicit}
Let $E$ be a real vector bundle over $M$ with a family of local trivializations and the corresponding transition functions as follows:
\[\{\varphi_i:\pi^{-1}(U_i)\to U_i\times \mathbbm{R}^{2r}\},\qquad \varphi_{ij}=\varphi_i\circ \varphi_j^{-1}:U_i\cap U_j\to \mathrm{GL}(2r,\mathbbm{R}).\] Then $E$ is generalized holomorphic vector bundle over $M$ with local trivializations $\{\varphi_i\}$ 
 if and only if 
 \begin{itemize}
 \item[\rm{(1)}] $\varphi_{ij}(m)\in \mathrm{GL}(r,\mathbbm{C})$, namely, $E$ is a complex vector bundle;
 \item[\rm{(2)}] each entry $A_{\lambda\mu}: U_i\cap U_j\to \mathbbm{C}$ of $\varphi_{ij}=(A_{\lambda\mu})_{r\times r}$  is a generalized holomorphic function.
\end{itemize}
\end{Pro}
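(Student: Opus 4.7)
The plan is to reduce the statement to a pointwise linear-algebraic computation on the transition maps. Since a composition of generalized holomorphic homeomorphisms is again one, demanding that every $\varphi_i$ be a generalized holomorphic homeomorphism is equivalent to demanding that every transition map
$$\Phi := \varphi_{ij}\colon (U_i\cap U_j)\times \mathbbm{R}^{2r} \longrightarrow (U_i\cap U_j)\times \mathbbm{R}^{2r},\qquad (m,v)\mapsto \bigl(m,\varphi_{ij}(m)\,v\bigr),$$
be a generalized holomorphic homeomorphism between open subsets of $U\times \mathbbm{C}^r$ equipped with the product generalized complex structure $\mathcal{J}_{\mathrm{prod}}=\mathcal{J}_M\oplus \mathcal{J}_{J_0}$. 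Both implications of the proposition then follow from analyzing identity \eqref{ghh} applied to $\Phi$.

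For the direction $(\Rightarrow)$, I would write the pushforward in the block decomposition $T(U\times \mathbbm{C}^r)=TU\oplus T\mathbbm{C}^r$ as
$$\Phi_*(X,\eta)=\bigl(X,\ \varphi_{ij}(m)\,\eta+d\varphi_{ij}(X)\cdot v\bigr),$$
and its dual $(\Phi^{-1})^*$ accordingly. Using $\mathcal{J}_{\mathrm{prod}}$ in the form \eqref{matrixj}, I would expand \eqref{ghh} block by block. The $J$-block splits into a fiber--fiber identity $\varphi_{ij}(m)\,J_0=J_0\,\varphi_{ij}(m)$, which is $\varphi_{ij}(m)\in \GL(r,\mathbbm{C})$ and gives condition (1), together with a mixed identity $d\varphi_{ij}(JX)=J_0\,d\varphi_{ij}(X)$, equivalent entry-by-entry to $(A_{\lambda\mu})_*\circ J=J_0\circ (A_{\lambda\mu})_*$. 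The $\beta$-block reduces similarly to $(A_{\lambda\mu})_*\circ \beta=0$ for every entry. By Proposition \ref{Pro 2.29}, the two scalar identities on $A_{\lambda\mu}$ together say precisely that each $A_{\lambda\mu}$ is a generalized holomorphic function, which is condition (2).

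For the direction $(\Leftarrow)$, I would run the same calculation backwards: conditions (1) and (2), combined with Proposition \ref{Pro 2.29}, yield the block-wise identities equivalent to \eqref{ghh} for every $\varphi_{ij}$. Declaring each $\varphi_i$ to be a generalized holomorphic homeomorphism transports the product structure from $U_i\times \mathbbm{C}^r$ onto $\pi^{-1}(U_i)$, and these locally transported structures agree on overlaps precisely because each $\varphi_{ij}$ now satisfies \eqref{ghh}. Gluing produces a well-defined generalized complex structure on the total space $E$, with respect to which all $\varphi_i$ are tautologically generalized holomorphic homeomorphisms.

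The main obstacle is the careful bookkeeping in the block decomposition, specifically verifying that the $B$-block and the dualized $-J^*$-block of \eqref{ghh} impose no constraint beyond those already produced by the $J$- and $\beta$-blocks. I expect this to follow by dualizing, using $\varphi_{ij}(m)\in \GL(r,\mathbbm{C})$ so that $(\Phi^{-1})^*$ restricts to the complex inverse-transpose on the fiber cotangent spaces, but it is the most delicate verification in the argument and is what makes the two clean scalar conditions on the entries $A_{\lambda\mu}$ fully equivalent to \eqref{ghh} for $\Phi$.
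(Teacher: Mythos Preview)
Your proposal is correct and follows essentially the same route as the paper: reduce to the transition maps $\varphi_{ij}$ being generalized holomorphic homeomorphisms, write $(\varphi_{ij})_{*(m,v)}$ and $(\varphi_{ji})^*_{(m,v)}$ in the $T_mU_{ij}\oplus T_vV$ block form, expand \eqref{ghh} into the four block equations, and invoke Proposition~\ref{Pro 2.29} on the entries. Your anticipated ``main obstacle'' is exactly what the paper checks---the $B$-block identity is automatically satisfied and the $-J^*$-block identity is the dual of the $J$-block identity---so no further constraint appears.
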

\begin{proof}
 First we claim that $E$ is a generalized holomorphic vector bundle with local trivializations $\{\varphi_i\}$ if and only if  \[\varphi_{ij}=\varphi_i\circ \varphi_j^{-1}:U_{ij}\times \mathbbm{C}^r\to U_{ij}\times \mathbbm{C}^r,\qquad U_{ij}=U_i\cap U_j\] is a generalized holomorphic homeomorphism for any fixed $i,j$. Indeed, if $\varphi_{ij}$ is a generalized holomorphic homeomorphism, we have
\begin{eqnarray}\label{3.7}
\begin{pmatrix} (\varphi_{ij})_* & 0\\ 0& (\varphi_{ji})^*\end{pmatrix}\circ 
\begin{pmatrix} \mathcal{J}_{11} & \mathcal{J}_{12}\\ \mathcal{J}_{21}& \mathcal{J}_{22}\end{pmatrix}=
\begin{pmatrix} \mathcal{J}_{11} & \mathcal{J}_{12}\\ \mathcal{J}_{21}& \mathcal{J}_{22}\end{pmatrix}\circ 
\begin{pmatrix} (\varphi_{ij})_* & 0\\ 0& (\varphi_{ji})^*\end{pmatrix},
\end{eqnarray}
where 
$(\mathcal{J}_{ij})_{2\times 2}$ is the generalized complex structure on $U_{ij}\times \mathbbm{C}^r$. 
Equation ($\ref{3.7}$) makes sure that 
\begin{eqnarray*}
\begin{pmatrix} (\varphi_i^{-1})_* & 0\\ 0& (\varphi_i)^*\end{pmatrix}\circ 
\begin{pmatrix} \mathcal{J}_{11} & \mathcal{J}_{12}\\ \mathcal{J}_{21}& \mathcal{J}_{22}\end{pmatrix}\circ 
\begin{pmatrix} (\varphi_i)_* & 0\\ 0& (\varphi_i^{-1})^*\end{pmatrix},\end{eqnarray*}
which maps $T\pi^{-1} (U_i)\oplus T^*\pi^{-1}(U_i)$ to itself, gives a generalized complex structure on $\pi^{-1}(U_i)$, which is independent of the choice of $\varphi_i$. Thus we obtain a generalized holomorphic vector bundle structure on $E$ such that $\varphi_i$ is a generalized holomorphic homeomorphism and $\pi:E\to M$ is a generalized holomorphic map. The other side of the claim is obvious. 

Then we claim that $\varphi_{ij}$ being a generalized holomorphic homeomorphism is equivalent to the conditions $(1)$ and $(2)$ as desired. 
Denote by $(V,J_0)=\mathbbm{C}^r$ and let $\mathcal{J}$ 
be the generalized complex structure on $M$ as given by \eqref{matrixj}. Then the generalized complex structure $\mathcal{J}$ on $U_{ij}\times V$ is 
expressed as \begin{equation*}
\mathcal{J}_{11}=\begin{pmatrix} J & 0\\ 0& J_0\end{pmatrix},\quad 
\mathcal{J}_{12}=\begin{pmatrix}\beta & 0\\ 0& 0\end{pmatrix},\quad
\mathcal{J}_{21}=\begin{pmatrix} B& 0\\ 0&0\end{pmatrix},\quad
\mathcal{J}_{22}=\begin{pmatrix} -J^* &0\\ 0 & -J_0^*\end{pmatrix}.
\end{equation*}
 Unraveling $(\ref{3.7})$ at $(m,v)\in U_{ij}\times V$, we get
 \begin{eqnarray}
 \label{unraveling1} (\varphi_{ij})_{*(m,v)}\circ \mathcal{J}_{11}&=&\mathcal{J}_{11}\circ  (\varphi_{ij})_{*(m,v)};\\
  \label{unraveling2} (\varphi_{ij})_{*(m,v)}\circ \mathcal{J}_{12}&=&\mathcal{J}_{12}\circ  (\varphi_{ji})^*_{(m,v)};\\
   \label{unraveling3} (\varphi_{ji})^*_{(m,v)}\circ \mathcal{J}_{21}&=&\mathcal{J}_{21}\circ  (\varphi_{ij})_{*(m,v)};\\
      \label{unraveling4} (\varphi_{ji})_{(m,v)}^*\circ \mathcal{J}_{22}&=&\mathcal{J}_{22}\circ  (\varphi_{ji})^*_{(m,v)}.
  \end{eqnarray}
  Notice that $\varphi_{ij}(m,v)=(m,\varphi_{ij}(m)(v))$ and $\varphi_{ij}(m)\in \mathrm{GL}(2r, \mathbbm{R})$.
  We have 
  \begin{eqnarray*}
  (\varphi_{ij})_{*(m,v)}=
\begin{pmatrix} \mathrm{Id} & 0\\ \iota_{v}\circ (\varphi_{ij})_{*m}& \varphi_{ij}(m)\end{pmatrix}:T_{m}U_{ij}\oplus T_{v} V\to T_m U_{ij}\oplus T_{\varphi_{ij}(m)v} V,
  \end{eqnarray*}
where $\iota_v: \mathrm{gl}   (2r, \mathbbm{R})\to \mathbbm{R}^{2r}$ is defined by $\iota_v(A)=A(v)$. Also we get \begin{eqnarray*}
  (\varphi_{ji})_{(m,v)}^*=
\begin{pmatrix}\mathrm{Id} & (\varphi_{ji})_m^*\circ \iota_{\varphi_{ij}(m)v}^*\\ 0& (\varphi_{ji}(m))^*\end{pmatrix}:T_{m}^*U_{ij}\oplus T_{v}^* V\to T_m^* U_{ij}\oplus T_{\varphi_{ij}(m)v}^* V.
  \end{eqnarray*}
Then from (\ref{unraveling1}),  we have
\begin{eqnarray}\label{3.12-13}
\varphi_{ij}(m)\circ J_0=J_0\circ \varphi_{ij}(m),\qquad \iota_{v}\circ (\varphi_{ij})_{*m}\circ J=J_0\circ \iota_v\circ (\varphi_{ij})_{*m}.
\end{eqnarray}
By (\ref{unraveling2}), we obtain
\begin{eqnarray*}
\iota_{v}\circ (\varphi_{ij})_{*m}\circ \beta=0,\qquad \beta\circ (\varphi_{ji})^*_m\circ \iota^*_{\varphi_{ij}(m)v}=0.
\end{eqnarray*}
Since $v$ is arbitrary and $\beta^*=-\beta$, the above two equations are equivalent and can be written as
\begin{eqnarray}\label{3.14}
(\varphi_{ij})_{*m}\circ \beta=0.
\end{eqnarray}
Note that Equation (\ref{unraveling3}) is trivial and (\ref{unraveling4}) holds if and only if   (\ref{unraveling1}) holds. So $\varphi_{ij}$ is a generalized holomorphic homeomorphism if and only if  (\ref{3.12-13}) and (\ref{3.14}) are satisfied, which amounts to that $\varphi_{ij}(m)\in \mathrm{GL}(r,\mathbbm{C})$ and \[(A_{\lambda \mu})_*\circ J=J_0\circ (A_{\lambda\mu})_*,\qquad (A_{\lambda\mu})_*\circ \beta=0.\]
By Proposition \ref{Pro 2.29}, the above relations imply that every $A_{\lambda\mu}$ is a generalized holomorphic function.
\end{proof}

\begin{Ex}
Let $\varphi: M\to N$ be a generalized holomorphic map and $E$  a generalized holomorphic vector bundle over $N$. Then the pull-back bundle $\varphi^*E\to M$ is also a generalized holomorphic vector bundle. Actually, the transition functions of $\varphi^* E$ are the pull-backs of the transition functions of $E$ by $\varphi$, i.e., composed by $\varphi$, which are still generalized holomorphic functions, as the composition of two generalized holomorphic maps is  still a generalized holomorphic map and by Proposition \ref{Pro 2.29}.
\end{Ex}
The standard operations on vector spaces, such as the direct sum, tensor product, exterior and symmetric power, etc., extend naturally to generalized holomorphic vector bundles.

By Proposition \ref{Explicit} and Example \ref{c} and \ref{s}, we have 

\begin{Cor}
\begin{itemize}
\item[\rm{(1)}] A generalized holomorphic vector bundle over a complex manifold is a holomorphic vector bundle;
\item[\rm{(2)}] A generalized holomorphic vector bundle over a symplectic manifold is a complex vector bundle with a flat connection.
\end{itemize}

\end{Cor}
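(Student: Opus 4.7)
The proof is essentially an immediate application of Proposition \ref{Explicit} combined with Examples \ref{c} and \ref{s}, so my plan is to reduce each statement to what these results say about generalized holomorphic functions in the two special cases.

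For part (1), I start with a generalized holomorphic vector bundle $E \to M$ over a complex manifold. By Proposition \ref{Explicit}, $E$ is a complex vector bundle whose transition functions $\varphi_{ij}\colon U_i\cap U_j \to \mathrm{GL}(r,\mathbb{C})$ have entries $A_{\lambda\mu}$ that are generalized holomorphic functions on $M$. Now I invoke Example \ref{c}, where it is shown that for the standard generalized complex structure $\mathcal{J}_J$ associated to a complex structure $J$, the differential $d_-$ coincides with the Dolbeault operator $\overline{\partial}$, so that the generalized holomorphic functions are precisely the ordinary holomorphic functions. Consequently the transition functions $\varphi_{ij}$ are holomorphic $\mathrm{GL}(r,\mathbb{C})$-valued maps, which is exactly the data of a holomorphic vector bundle.

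For part (2), I again apply Proposition \ref{Explicit} to obtain a complex vector bundle $E$ with transition functions $\varphi_{ij}$ whose entries are generalized holomorphic. By Example \ref{s}, on a symplectic manifold $(M,\omega)$ the Lie algebroid differential $d_-$ is identified with the de Rham differential $d$, so the generalized holomorphic functions are exactly the locally constant functions. Hence each transition function $\varphi_{ij}$ is locally constant. This immediately produces a flat connection on $E$: in each local trivialization $\varphi_i$, define $\nabla^i$ to be the trivial connection $d$; on overlaps one checks $\nabla^i = \nabla^j$ because
\[
\nabla^i s - \nabla^j s = d(\varphi_{ij}) \cdot s_j = 0,
\]
where $s_i = \varphi_{ij} s_j$ is the expression of a section in the two trivializations. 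These local flat connections therefore glue to a global flat connection on $E$.

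There is no real obstacle: both claims follow by combining Proposition \ref{Explicit}, which translates the generalized holomorphic bundle structure into conditions on transition functions, with the pointwise computation of generalized holomorphic functions performed in Examples \ref{c} and \ref{s}. The only mild verification needed is the gluing of trivial connections in part (2), which is immediate from $d\varphi_{ij}=0$. I would write the proof as a short two-paragraph argument, one paragraph per item, without additional lemmas.
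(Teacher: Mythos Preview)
Your proposal is correct and follows exactly the approach indicated in the paper, which simply deduces the corollary from Proposition~\ref{Explicit} together with Examples~\ref{c} and~\ref{s} without giving any further argument. The only thing you add beyond the paper is the explicit verification in part~(2) that the trivial connections glue because $d\varphi_{ij}=0$, which is a reasonable elaboration of what the paper leaves implicit.
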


A vector bundle $E$ on a Poisson manifold $(M,\pi)$ is called a \emph{ Poisson module} (\cite{G,Hitchin4}) if there is a bracket 
$\{\cdot,\cdot\}:C^\infty(M)\times \Gamma(E)\to \Gamma(E)$ such that
\begin{eqnarray*}
\{f,gs\}&=&\{f,g\}_M s+g\{f,s\};\qquad \forall f,g\in C^\infty(M),s\in \Gamma(E),\\  \{fg,s\}&=&f\{g,s\}+g\{f,s\};\\ \{\{f,g\}_M,s\}&=&\{f,\{g,s\}\}-\{g,\{f,s\}\},
\end{eqnarray*}
where $\{\cdot,\cdot\}_M$ is the Poisson bracket on $M$.
\begin{Pro}\label{newpro}
Let $E$ be a generalized holomorphic vector bundle over a generalized complex manifold $M$. Then $E$ is a Poisson module on the Poisson manifold $M$ with the Poisson bracket given by \eqref{Poisson}. Given any trivialization as in Definition \ref{Definition3.1} and the corresponding local frame $\{e_1,\cdots,e_r\}$ for $E$ on the open subset of $U_i$ of $M$, we have 
\begin{eqnarray}\label{poisson mods}
\{f,s\}:=\sum_{\lambda=1}^r\{f,s_\lambda\}_M e_\lambda,\qquad s|_{U_i}=\sum_{\lambda=1}^rs_{\lambda} e_\lambda.
\end{eqnarray}
\end{Pro}
\begin{proof}
 By Proposition \ref{Explicit}, $E$ is a generalized holomorphic vector bundle over $M$ if and only if its  transition functions are generalized holomorphic functions. We claim that the bracket $\{f,s\}$ defined in \eqref{poisson mods} does not depend on the choice of trivialization. Let $\{f_1,\cdots,f_r\}$ be another basis of sections of $E$ on $U_j$ given by $\varphi_j$  and $U_i\cap U_j\neq \varnothing$. If $\varphi_{ij}=(A_{\lambda\mu})_{r\times r}$ is the transition matrix, then $e_\lambda=\sum_{\mu=1}^r A_{\lambda\mu} f_\mu$. So 
\[s{|_{U_{ij}}}=\sum_{\lambda=1}^rs_\lambda e_\lambda=\sum_{\mu=1}^r (\sum_{\lambda=1}^r s_\lambda A_{\lambda \mu}) f_\mu.\]
Hence, we obtain
\[\{f,s\}:=\sum_{\mu=1}^r\sum_{\lambda=1}^r\{f,s_\lambda A_{\lambda\mu}\}_Mf_\mu=\sum_{\mu=1}^r\sum_{\lambda=1}^r A_{\lambda\mu}\{f,s_\lambda \}_M f_\mu=\sum_{\lambda=1}^r\{f,s_\lambda\}_M e_\lambda,\]
where we have used the fact that $A_{\lambda\mu}$ is Casimir with respect to $\{\cdot,\cdot\}_M$ as $d_- A_{\lambda \mu}=0$. So $\{f,s\}$ is well-defined. It is direct to check that it is a Poisson module based on the fact that $\{\cdot,\cdot\}_M$ is a Poisson bracket. 
 \end{proof}

\begin{Cor}Let $E$ be a complex vector bundle over a holomorphic Poisson manifold $(M,\pi)$. It is a generalized holomorphic vector bundle if and only if it is a holomorphic vector bundle with a Poisson module structure which is given by \eqref{poisson mods}.
\end{Cor}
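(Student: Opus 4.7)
The plan is to reduce everything to the characterization already established: by Proposition \ref{Explicit} a complex vector bundle $E$ on $(M,\mathcal{J})$ is generalized holomorphic exactly when its transition functions $A_{\lambda\mu}$ are generalized holomorphic $\mathbbm{C}$-valued functions, and by Example \ref{holo Poisson} a function on a holomorphic Poisson manifold $(M,\pi)$ is generalized holomorphic if and only if it is both holomorphic and Casimir with respect to the Poisson bracket \eqref{twopoisson}. So the corollary will follow by matching each side of the equivalence with one of these two conditions on the transition cocycle.

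For the forward direction, assume $E$ is a generalized holomorphic vector bundle. Proposition \ref{Explicit} gives generalized holomorphic transition functions, and Example \ref{holo Poisson} then says each $A_{\lambda\mu}$ is holomorphic and Casimir; the holomorphy alone makes $E$ a holomorphic vector bundle in the classical sense. Proposition \ref{newpro} is directly applicable and produces the Poisson module structure \eqref{poisson mods}.

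For the converse, suppose $E$ is a holomorphic vector bundle equipped with a Poisson module structure given by the formula \eqref{poisson mods}. The key observation is that the \emph{well-definedness} of \eqref{poisson mods} forces the transition functions to be Casimir: comparing the expressions for $\{f,s\}$ computed in two overlapping trivializations $\{e_\lambda\}$, $\{f_\mu\}$ with $e_\lambda=\sum_\mu A_{\lambda\mu}f_\mu$, the difference is precisely $\sum_{\lambda,\mu}s_\lambda\{f,A_{\lambda\mu}\}_M f_\mu$, and this must vanish for arbitrary local sections $s$ and arbitrary $f\in C^\infty(M)$, hence $\{f,A_{\lambda\mu}\}_M=0$ for all $f$. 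Combined with the given holomorphy of $A_{\lambda\mu}$, Example \ref{holo Poisson} yields that each $A_{\lambda\mu}$ is generalized holomorphic, and Proposition \ref{Explicit} delivers the generalized holomorphic vector bundle structure.

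The only delicate point, and what I view as the sole nontrivial step, is the Casimir extraction in the converse: one has to be explicit that \eqref{poisson mods} is being read as a \emph{definition} of the bracket in each trivialization, so that compatibility on overlaps is a genuine constraint on the $A_{\lambda\mu}$ rather than an automatic consequence. Everything else is a direct appeal to Proposition \ref{Explicit}, Proposition \ref{newpro}, and Example \ref{holo Poisson}.
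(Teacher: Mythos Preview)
Your proposal is correct and follows essentially the same route as the paper: both directions hinge on Proposition \ref{Explicit} to reduce to the transition functions, on Example \ref{holo Poisson} to identify generalized holomorphic with holomorphic plus Casimir, and on the well-definedness computation in the proof of Proposition \ref{newpro} to extract the Casimir condition. Your write-up is in fact a bit more explicit than the paper's (you spell out the difference term $\sum_{\lambda,\mu}s_\lambda\{f,A_{\lambda\mu}\}_M f_\mu$ and note that holomorphy of the transition functions gives the underlying holomorphic bundle structure in the forward direction), but the argument is the same.
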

\begin{proof}
On the one hand,  if $E$ is a generalized holomorphic  vector bundle, it has a Poisson module structure by Proposition \ref{newpro}. On the other hand, from the above proof, Formula \eqref{poisson mods} is well-defined if and only if each entry of  $A_{\lambda \mu}$ of  the transition matrix satisfies $\{f,A_{\lambda\mu}\}_M=0$ for any $f\in C^\infty(M)$, namely, it is Casimir with respect to $\pi$ by  \eqref{twopoisson}. As $E$ is a holomorphic vector bundle, we have that $A_{\lambda \mu}$ is a holomorphic function. So by  Example \ref{holo Poisson},  each $A_{\lambda\mu}$ is a generalized holomorphic function.  Thus  by Proposition \ref{Explicit}, $E$ is a generalized holomorphic vector bundle.
  \end{proof}

Let us recall the definition of an $A$-connection for a Lie algebroid $A$. In particular, an ordinary connection is a $TM$-connection.
\begin{Def}\label{3.6}
Let $A$ be a Lie algebroid. An \emph{ $A$-connection} on a vector bundle $E$ is an operator $\nabla: \Gamma(E)\to \Gamma(A^*\otimes E)$ such that
\[\nabla_{fu} e=f\nabla_u e,\qquad \nabla_u (fe)=\rho(u)f e+f\nabla_u e,\qquad \forall f\in C^\infty(M),u\in \Gamma(A),e\in \Gamma(E).\]
The \emph{ curvature} $R^\nabla=\nabla^2\in \Gamma(\wedge^2 A^*\otimes \End(E))$ of an $A$-connection $\nabla$  is defined by
\[R^\nabla(u,v)e=\nabla_u\nabla_v e-\nabla_v\nabla_u e-\nabla_{[u,v]} e,\qquad \forall u,v\in \Gamma(A), e\in \Gamma(E).\]
If $R^\nabla=0$, we say that $\nabla$ is a \emph{flat $A$-connection} and $E$ is an \emph{ $A$-module}. 
\end{Def}
With this definition, a vector bundle $E$ on a Poisson manifold $(M,\pi)$ is a Poisson module if it is a $T^*_\pi M$-module, where $T^*_\pi M$ is the Lie algebroid associated with the Poisson structure $\pi$ on $M$.

In complex geometry, for a complex vector bundle $E$ on a complex manifold $M$, a holomorphic structure on $E$ is uniquely determined by a $\mathbbm{C}$-linear operator $\overline{\partial}_E: \Gamma(U,E)\to \Gamma(U, (T^*M)^{0,1} \otimes E)$ satisfying the Leibniz rule and the integrability condition $\overline{\partial}_E^2=0$. See for example \cite{H}. 
Such an operator also exists for a generalized holomorphic vector bundle.
 
 \begin{Thm}\label{3.11}
For a generalized complex manifold $(M,\mathcal{J})$, let $L_{-}\subset T_{\mathbbm{C}} M\oplus T_{\mathbbm{C}}^* M$ be the $-i$-eigenbundle of $\mathcal{J}$, which is a Lie algebroid.  If $E$ is a generalized holomorphic vector bundle over $M$,  there exists an $L_{-}$-connection $\overline{\partial}_E$ on $E$ such that
$\overline{\partial}_E^2=0$.
\end{Thm}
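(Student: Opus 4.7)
The plan is to build $\overline{\partial}_E$ locally from the operator $d_-$ using a generalized holomorphic frame, then verify that the definition is independent of the trivialization, satisfies the Leibniz rule of Definition \ref{3.6}, and is flat. The whole construction mirrors the standard recipe for the Dolbeault operator of a holomorphic vector bundle, with the role of $\overline{\partial}$ replaced by the Lie algebroid differential $d_-:\Gamma(\wedge^\bullet L_-^*)\to \Gamma(\wedge^{\bullet+1} L_-^*)$.

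More precisely, fix a trivialization $\varphi_i:E|_{U_i}\to U_i\times \mathbb{C}^r$ from the generalized holomorphic atlas and let $\{e_1,\dots,e_r\}$ be the associated local frame. For $s\in \Gamma(U_i,E)$ with $s=\sum_{\lambda} s_\lambda e_\lambda$, I would set
\[
\overline{\partial}_E s \; := \; \sum_{\lambda=1}^r (d_- s_\lambda)\otimes e_\lambda \;\in\; \Gamma(U_i, L_-^*\otimes E).
\]
The Leibniz rule $\overline{\partial}_E(fs)=d_- f\otimes s + f\overline{\partial}_E s$ and the $C^\infty(M)$-linearity in the $L_-$-direction are inherited directly from the corresponding properties of $d_-$ acting on functions.

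The key step — and I expect this to be the main technical point — is showing that the local formula is independent of the chosen trivialization, so that the $\overline{\partial}_E$'s glue to a global $L_-$-connection on $E$. On an overlap $U_{ij}=U_i\cap U_j$ with transition matrix $\varphi_{ij}=(A_{\lambda\mu})$, a section $s$ has two local expressions $s=\sum_\lambda s_\lambda e_\lambda=\sum_\mu t_\mu f_\mu$ with $e_\lambda=\sum_\mu A_{\lambda\mu}f_\mu$. By Proposition \ref{Explicit}, each $A_{\lambda\mu}$ is a generalized holomorphic function, hence $d_- A_{\lambda\mu}=0$. Using the Leibniz rule for $d_-$, this immediately gives
\[
\sum_\mu (d_- t_\mu)\otimes f_\mu \;=\; \sum_{\lambda,\mu}(d_-(s_\lambda A_{\lambda\mu}))\otimes f_\mu \;=\; \sum_{\lambda,\mu} A_{\lambda\mu}(d_- s_\lambda)\otimes f_\mu \;=\; \sum_\lambda (d_- s_\lambda)\otimes e_\lambda,
\]
so the two local definitions agree. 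This is essentially the same cocycle computation used in the proof of Proposition \ref{newpro} for the Poisson module structure, and it is the only place where the definition of a generalized holomorphic vector bundle enters.

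Finally, flatness follows at once from $d_-^2=0$: in the local frame $\{e_\lambda\}$ one has
\[
\overline{\partial}_E^2 s \;=\; \overline{\partial}_E\Bigl(\sum_\lambda (d_- s_\lambda)\otimes e_\lambda\Bigr) \;=\; \sum_\lambda (d_-^2 s_\lambda)\otimes e_\lambda \;=\; 0,
\]
because the frame sections $e_\lambda$ are, by construction, annihilated by $\overline{\partial}_E$. Thus $\overline{\partial}_E$ is the required flat $L_-$-connection, turning $E$ into an $L_-$-module in the sense of Definition \ref{3.6}.
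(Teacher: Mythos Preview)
Your proof is correct and follows essentially the same approach as the paper: define $\overline{\partial}_E$ locally via $d_-$ in a generalized holomorphic frame, use Proposition~\ref{Explicit} to get $d_- A_{\lambda\mu}=0$ and hence trivialization-independence, and deduce flatness from $d_-^2=0$. The only cosmetic difference is that the paper states $\overline{\partial}_E^2=0$ before the gluing check rather than after, and leaves the Leibniz rule implicit.
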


\begin{proof}
Assume that $\{\varphi_i:E|_{U_i}\to U_i\times \mathbbm{C}^r\}$ is a local trivialization of $E$ and $\varphi_{ij}:U_{ij}\to \mathrm{GL}(r,\mathbbm{C})$ are the transition functions. Let $\{e_1,\cdots,e_r\}$ be a basis of smooth sections of $E$ on $U_i$ given by $\varphi_i$.  For any $s\in \Gamma(E)$, we have
\[s|_{U_i}=\sum_{\lambda=1}^r s_\lambda e_\lambda,\]
where $\{s_\lambda\}$ are complex  functions on $U_i$. Define 
\[\overline{\partial}_E(s)|_{U_i}:=\sum_{\lambda=1}^r d_{-}(s_\lambda)\otimes e_\lambda.\]
Following from $d_{-}^2=0$, we have $\overline{\partial}_E^2=0$. Now we check that $\overline{\partial}_E$ does not depend on the choice of $\varphi_i$.
Suppose $\{f_1,\cdots,f_r\}$ is another basis of sections of $E$ on $U_j$ given by the local trivialization $\varphi_j$. If $\varphi_{ij}=(A_{\lambda \mu})_{r\times r}$ is a transition matrix, then we have $e_\lambda=\sum_{\mu=1}^r A_{\lambda \mu} f_\mu$. So we have
\[s{|_{U_{ij}}}=\sum_{\lambda=1}^r\sum_{\mu=1}^r s_\lambda A_{\lambda \mu} f_\mu.\]
Hence, we obtain
\[\overline{\partial}_E (s)|_{U_{ij}}=\sum_{\lambda=1}^r\sum_{\mu=1}^r d_{-}(s_\lambda A_{\lambda \mu} )f_\mu
=\sum_{\lambda=1}^r\sum_{\mu=1}^r d_{-}(s_\lambda) A_{\lambda \mu} f_\mu=\sum_{\lambda=1}^r d_{-}(s_\lambda)\otimes e_\lambda,\]
where we have used the fact that $d_{-} (A_{\lambda \mu})=0$ as $A_{\lambda\mu}$ is a generalized holomorphic function. So $\overline{\partial}_E$ does not depend on the choice of local trivializations.
\end{proof}
\begin{Rm}In \cite{Gualtieri}, according to Gualtieri,  a generalized holomorphic vector bundle over a generalized complex manifold $M$ is defined as a vector bundle $E$ with an $L_{-}$-module structure for Lie algebroid $L_-$. As a consequence of Theorem \ref{3.11}, we see that our generalized holomorphic vector bundle  is a special case of the generalized holomorphic vector bundle by Gualtieri  with the module structure  given particularly by $d_{-}$. 

One advantage of our definition is that  the total space $E$ is still a generalized complex manifold. For Gualtieri's definition, this is not true. In \cite{W1}, the author gave the criterion for $E$ to be equipped with a generalized complex structure by using local coordinates, which turns out to be related with the Poisson structure \eqref{Poisson} on the base manifold $M$. The intrinsic conditions for the total space to be a generalized complex manifold are still unknown.
\end{Rm}

A section $s$ of a generalized holomorphic vector bundle $E$ is called \emph{ generalized holomorphic} if it satisfies that $\overline{\partial}_E s=0$. 
Choosing a local trivialization $\varphi: E|_U\to U\times \mathbbm{C}^r$, a section $s$ can be written locally as $s=(s_1,\cdots,s_r)$ for $s_i:U\to \mathbbm{C}$. By Proposition \ref{Pro 2.29}, we have  alternative descriptions of generalized holomorphic sections.

\begin{Lem}
Let $E\to M$ be a generalized holomorphic vector bundle and let $s$ be a section of the vector bundle $E\to M$. The following assertions are equivalent:
\begin{itemize}
\item[\rm{(1)}] $s:M\to E$ is a generalized holomorphic section;
\item[\rm{(2)}] In any local trivialization of $E$ on some open subset $U$ of $M$, every component $s_i: U\to \mathbb{C}$ of $s$ is a generalized holomorphic function;
\item[\rm{(3)}] $s:M\to E$ is a generalized holomorphic map. 
\end{itemize}
\end{Lem}
\begin{proof}
Let $\{e_1,\cdots,e_r\}$ be a basis of sections of $E$ on $U$ given by $\varphi$. Then $\overline{\partial}_E(s)=\sum_{\lambda=1}^rd_{-}(s_\lambda)\otimes e_\lambda$. So $\overline{\partial}_E(s)=0$ if and only if $d_{-}(s_\lambda)=0$. Namely, a section $s$ is a generalized holomorphic section if and only if all $s_\lambda$ are generalized holomorphic functions. Next, note that $s|_U:U\to U\times \mathbbm{C}^r, m\mapsto (m,s_1(m),\cdots,s_r(m))$. By Proposition \ref{Pro 2.29}, we see the equivalence of $(1)$, $(2)$ and $(3)$. \end{proof}

For similar reasons as in $\mathrm{(ii)}$ of Remark \ref{2remark}, the space of generalized holomorphic sections also forms a sheaf.

\begin{Def}\label{homomorphism}
A bundle map $f:E\to F$  between two generalized holomorphic vector bundles $E$ and $F$ is called a \emph{homomorphism} if  it is a generalized holomorphic map.
\end{Def}



\begin{Lem}
Let $E$ and $F$ be two generalized holomorphic vector bundles over $M$ and $f: E\to F$ a bundle map. Choose local trivializations $\varphi: E|_U\to U\times \mathbb{C}^k$ and $\phi: F|_V\to V\times \mathbb{C}^n$. Suppose $U\cap V\neq \varnothing$ and consider \[\hat{f}=\phi\circ f\circ  \varphi^{-1}: U\cap V\to \mathrm{gl}(n\times k,\mathbb{C}).\] Then $f$ is  a homomorphism between generalized holomorphic vector bundles if and only if  every entry of  $\hat{f}$ is a generalized holomorphic function.
\end{Lem}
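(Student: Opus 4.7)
The proof will parallel closely the argument of Proposition \ref{Explicit}, the only difference being that one is now dealing with a rectangular matrix $\hat{f}$ valued in $\mathrm{gl}(n\times k,\mathbbm{C})$ instead of a square transition matrix valued in $\mathrm{GL}(r,\mathbbm{R})$. The plan is to factor $f=\phi^{-1}\circ \hat{f}\circ \varphi$, reduce the problem to a trivial local model, and then unravel the generalized complex linearity condition componentwise.

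First I would observe that since $\varphi$ and $\phi$ are generalized holomorphic homeomorphisms, and since the composition of generalized holomorphic maps is again generalized holomorphic, the bundle map $f$ is a generalized holomorphic map on $\pi^{-1}(U)\cap f^{-1}(\pi^{-1}(V))$ if and only if the induced map
\[
\hat{f}:(U\cap V)\times \mathbbm{C}^k\to (U\cap V)\times \mathbbm{C}^n,\qquad (m,v)\mapsto (m,\hat{f}(m)v),
\]
is a generalized holomorphic map, where both sides carry the product generalized complex structure. This reduces the lemma to the trivialized setting.

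Next I would compute the differential of $\hat{f}$. Writing $\hat{f}(m,v)=(m,\hat f(m)\cdot v)$, the pushforward at $(m,v)$ is
\[
\hat f_{*(m,v)}=\begin{pmatrix}\mathrm{Id}&0\\ \iota_v\circ \hat f_{*m}&\hat f(m)\end{pmatrix}\colon T_mU\oplus T_v\mathbbm{C}^k\longrightarrow T_mU\oplus T_{\hat f(m)v}\mathbbm{C}^n,
\]
with $\iota_v(A)=A\cdot v$, exactly as in the proof of Proposition \ref{Explicit}. Following the same case analysis, the generalized complex linearity of $\hat{f}_{*(m,v)}$ at every point is equivalent to the two conditions
\[
\hat f(m)\circ J_0=J_0\circ \hat f(m),\qquad (A_{\lambda\mu})_*\circ J=J_0\circ (A_{\lambda\mu})_*,\qquad (A_{\lambda\mu})_*\circ \beta=0,
\]
where $\hat f(m)=(A_{\lambda\mu})_{n\times k}$ and $\mathcal{J}$ on $M$ has the block form \eqref{matrixj}. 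The first condition just says that $\hat{f}(m)$ is $\mathbbm{C}$-linear, which is automatic since the entries $A_{\lambda\mu}$ are already complex valued, while the remaining conditions are exactly the statement that each entry $A_{\lambda\mu}\colon U\cap V\to \mathbbm{C}$ satisfies the criterion (3) of Proposition \ref{Pro 2.29}.

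Invoking Proposition \ref{Pro 2.29}, the compatibility with $J$ and the vanishing on $\beta$ for each entry $A_{\lambda\mu}$ is equivalent to $A_{\lambda\mu}$ being a generalized holomorphic function. The main technical step is the matrix/pushforward computation that reduces the generalized complex linearity condition to entrywise identities, but this is essentially verbatim to what was carried out in the proof of Proposition \ref{Explicit}, with the only adjustment that $\hat{f}(m)$ is now an $n\times k$ rectangular matrix rather than square and hence invertibility need not appear. Putting these two reductions together gives the stated equivalence.
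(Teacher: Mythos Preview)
Your proposal is correct and follows essentially the same route as the paper: both reduce to the trivialized local model via the generalized holomorphic homeomorphisms $\varphi,\phi$, compute the block-matrix form of $\hat f_{*(m,v)}$, and then read off the entrywise conditions. The only cosmetic difference is that the paper expresses the generalized complex linearity condition using the projection $D=\rho(L_+)$ and the Poisson bivector $\beta$ directly, whereas you invoke criterion~(3) of Proposition~\ref{Pro 2.29}; these are equivalent by that proposition, so no new idea is needed.
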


\begin{proof}
Let $\mathcal{J}$
be the generalized complex structure on $M$ as given by \eqref{matrixj} and let $L_+$ be its $+i$-eigenbundle and $D=\rho_{T_{\mathbbm{C}} M}(L_+)$. For any point $(m,v)\in (U\cap V)\times \mathbb{C}^k$, we have
\begin{eqnarray*}
(\phi\circ f\circ \varphi^{-1})_{*(m,v)}=\begin{pmatrix} \mathrm{Id} & 0\\ \iota_{v}\circ \hat{f}_{*m}& \hat{f}(m)\end{pmatrix}: T_m (U\cap V)\oplus \mathbb{C}^k\to T_m (U\cap V)\oplus \mathbb{C}^n,
\end{eqnarray*}
where $\iota_v: \mathrm{gl}(n\times k,\mathbb{C})\to \mathbbm{C}^n$ is given by $\iota_v(A)=A(v)$. Then $f$ is a generalized holomorphic map if and only if $\phi\circ f\circ \varphi^{-1}$ is a generalized holomorphic map, if and only if
\[(\phi\circ f\circ \varphi^{-1})_{*(m,v)}(D_m\oplus (\mathbb{C}^k)^{1,0})\subset D_m\oplus (\mathbb{C}^n)^{1,0},\qquad (\phi\circ f\circ \varphi^{-1})_{*(m,v)}(\beta)=\beta.\]
This is equivalent to 
\[\iota_v \circ \hat{f}_{*m} (D_m)\subset (\mathbb{C}^n)^{1,0},\qquad \iota_v\circ \hat{f}_{*m}(\beta)=0,\]
which implies that each entry of $\hat{f}$  is a generalized holomorphic function.
\end{proof}

\subsection{The jet bundle}
Let $E$ be a generalized holomorphic vector bundle over a regular generalized complex manifold $M$. 
Recall that "regular" means that the dimension of $E\cap \bar{E}$  is locally constant, where $E=\rho(L_+)$.
We shall introduce the generalized holomorphic tangent and cotangent bundles of $M$, which are again generalized holomorphic vector bundles over $M$. Moreover, the  jet bundle $\mathfrak{J}^1 E$ is introduced and proved to be a generalized holomorphic vector bundle over $M$. The regularity assumption is actually not necessary if we use the sheaf language.

\begin{Pro}
Let $(M,\mathcal{J})$ be a regular generalized complex manifold and let $L_{-}\subset T_{\mathbbm{C}} M\oplus T^*_{\mathbbm{C}} M$ be the $-i$-eigenbundle of $\mathcal{J}$. Then $G^*M:=L_{-}\cap T^*_{\mathbbm{C}} M$ is a generalized holomorphic vector bundle over $M$, which is called the \emph{ generalized holomorphic cotangent bundle} of $M$.
\end{Pro}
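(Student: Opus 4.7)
My plan is to apply Proposition \ref{Explicit}, which reduces the claim to producing local trivializations of $G^*M$ as a complex vector bundle whose transition functions are generalized holomorphic. First, I would use the generalized Darboux theorem to fix canonical coordinates $(U_m; z, p, q)$ around each $m \in M$ of type $k$. In these coordinates (possibly after a $B$-transform), the Lie algebroid $L_-$ splits as the direct sum of $(L_-)_{J_0} = T^{0,1}\mathbbm{C}^k \oplus (T^*\mathbbm{C}^k)^{1,0}$ and $(L_-)_{\omega_0} = \{X + i\omega_0(X) : X \in T_{\mathbbm{C}}\mathbbm{R}^{2n-2k}\}$. Intersecting with $T^*_{\mathbbm{C}}M$ kills the tangent part, so the symplectic summand drops out and only $\mathrm{span}_{\mathbbm{C}}\{dz_1, \ldots, dz_k\}$ survives. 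Since a $B$-transform sends $X+\alpha$ to $X+\alpha+\iota_X B$ and therefore fixes all pure covectors (those with $X=0$), this identification is not disturbed by the $B$-transform hidden in the Darboux chart. We thus obtain a local complex trivialization $\varphi_U: G^*M|_U \to U\times \mathbbm{C}^k$ of rank $k$.

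Second, I would compute the transition between two such trivializations over canonical charts $(U; z,p,q)$ and $(V; z',p',q')$ with $U \cap V \neq \varnothing$. By Lemma \ref{coordinates2}, the partials $\partial z'_\lambda/\partial \bar z_\mu$, $\partial z'_\lambda/\partial p_\nu$, $\partial z'_\lambda/\partial q_\nu$ all vanish, so expanding the differential collapses to
\[
dz'_\lambda = \sum_{\mu=1}^{k} \frac{\partial z'_\lambda}{\partial z_\mu}\, dz_\mu.
\]
Hence the transition matrix has entries $A_{\lambda\mu} = \partial z'_\lambda/\partial z_\mu$ (complex-valued), and $(A_{\lambda\mu})\in \mathrm{GL}(k,\mathbbm{C})$ because the full coordinate Jacobian is block-triangular with invertible diagonal blocks.

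Third, I would verify that each $A_{\lambda\mu}$ is a generalized holomorphic function. By Example \ref{coordinates}, this reduces to checking that $\partial A_{\lambda\mu}/\partial \bar z_\nu$, $\partial A_{\lambda\mu}/\partial p_\nu$, $\partial A_{\lambda\mu}/\partial q_\nu$ all vanish, which follows at once by commuting second partial derivatives and applying Lemma \ref{coordinates2} once more to $z'_\lambda$. Proposition \ref{Explicit} then concludes that $G^*M$ is a generalized holomorphic vector bundle.

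The main obstacle is really in the opening step: establishing that $G^*M$ has constant fiber dimension $k$ and admits the stated trivialization in spite of the $B$-transform ambiguity in the Darboux normal form. The regularity assumption ensures the type is locally constant, hence the rank is constant; and the triviality of $B$-transforms on pure covectors, noted above, handles the normal-form ambiguity. Once those two points are in hand, the rest is a direct computation.
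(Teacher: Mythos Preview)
Your proposal is correct and follows essentially the same argument as the paper: trivialize $G^*M$ in canonical coordinates as $\mathrm{span}\{dz_1,\ldots,dz_k\}$ using $B$-transform invariance, compute the transition matrix via Lemma~\ref{coordinates2} as $(\partial z'_\lambda/\partial z_\mu)$, and verify these entries are generalized holomorphic by Example~\ref{coordinates} so that Proposition~\ref{Explicit} applies. You simply spell out in more detail the points the paper leaves implicit (the explicit splitting of $L_-$, the fact that $B$-transforms fix pure covectors, and the block-triangular Jacobian giving invertibility).
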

\begin{proof}
Let $(U,\varphi; z,p,q)$ be a canonical coordinate system. Since $G^*M$ is invariant under $B$-transform, we have that the space of local sections of $G^*M$ is
\[\Gamma(G^* M{|_U})=\mathrm{span}_{C^\infty(U)}\{dz_1,\cdots,dz_k\}.\]
If $(V,\psi;z',p',q')$ is another canonical coordinate system and $U\cap V \neq \varnothing$, we have
\[\Gamma(G^* M{|_V})=\mathrm{span}_{C^\infty(V)}\{dz'_1,\cdots,dz'_k\}.\]
Following from Proposition \ref{coordinates2}, we get
\[dz'_\mu=\frac{\partial z'_\mu}{\partial z_\lambda} dz_\lambda+\frac{\partial z'_\mu}{\partial\bar{z}_\lambda} d\bar{z}_\lambda+
\frac{\partial z'_\mu}{\partial p_\nu} dp_\nu+\frac{\partial z'_\mu}{\partial q_\nu} dq_\nu=\frac{\partial z'_\mu}{\partial z_\lambda} dz_\lambda.\]
Note that \[\frac{\partial}{\partial \bar{z}_\alpha}(\frac{\partial z'_\mu}{\partial z_\lambda})=0,\qquad \frac{\partial}{\partial p_\beta}(\frac{\partial z'_\mu}{\partial z_\lambda})=\frac{\partial}{\partial q_\beta}(\frac{\partial z'_\mu}{\partial z_\lambda})=0,\qquad \alpha=1,\cdots,k, \beta=1,\cdots,n-k.\]
By Example \ref{coordinates}, we see that
the transition functions $\frac{\partial z'_\mu}{\partial z_\lambda}$ are generalized holomorphic functions. We thus conclude that $G^*M$ is a generalized holomorphic vector bundle.
\end{proof}
In the same manner, $GM:=L_+\cap T_{\mathbbm{C}} M$ is also a generalized holomorphic vector bundle over $M$ and we call it the \emph{ generalized holomorphic tangent bundle } of $M$. The generalized holomorphic sections of $G^*M$ and $GM$ are called generalized holomorphic $1$-forms and generalized holomorphic vector fields respectively.

\begin{Lem}\label{pairingnew}
The pairing of a generalized holomorphic $1$-form and a generalized holomorphic vector field is a generalized holomorphic function.
\end{Lem}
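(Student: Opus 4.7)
The plan is to compute the pairing locally in canonical coordinates, where the sheaves of generalized holomorphic $1$-forms and vector fields admit an explicit description, and then reduce to the fact that the ring $\mathcal{O}_M$ of generalized holomorphic functions is closed under multiplication (which follows from $d_-$ being a derivation).

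First, I would fix a regular point $m \in M$ of type $k$ and choose canonical coordinates $(U; z, p, q)$ as in (\ref{canon}). From the proof of the preceding proposition, the bundle $G^*M$ is trivialized on $U$ by $\{dz_1, \ldots, dz_k\}$, and by exactly the same argument (using the conjugate statement of Lemma \ref{coordinates2} applied to $\partial/\partial z_\lambda$), the bundle $GM$ is trivialized on $U$ by $\{\partial/\partial z_1, \ldots, \partial/\partial z_k\}$, with transition functions $\partial z'_\lambda / \partial z_\mu$ that are generalized holomorphic.

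Next, I would unpack what it means for a section to be generalized holomorphic. Applying Theorem \ref{3.11} to $G^*M$ and $GM$ in the above trivialization, a local $1$-form $\alpha = \sum_{\lambda=1}^k f_\lambda\, dz_\lambda$ satisfies $\overline{\partial}_{G^*M}\alpha = \sum_\lambda d_-(f_\lambda) \otimes dz_\lambda$, so $\alpha$ is generalized holomorphic if and only if every coefficient $f_\lambda \in \mathcal{O}_M(U)$. Similarly, a generalized holomorphic vector field takes the form $X = \sum_{\mu=1}^k g_\mu\, \partial/\partial z_\mu$ with $g_\mu \in \mathcal{O}_M(U)$. In particular, the coordinate basis elements $dz_\lambda$ and $\partial/\partial z_\mu$ are themselves generalized holomorphic, because their components in the chosen local frame are constants.

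Finally, since $dz_\lambda(\partial/\partial z_\mu) = \delta_{\lambda\mu}$, the pairing evaluates to
\[
\alpha(X)\big|_U \;=\; \sum_{\lambda=1}^k f_\lambda\, g_\lambda.
\]
The Leibniz rule for $d_-$ gives $d_-(f_\lambda g_\lambda) = f_\lambda\, d_- g_\lambda + g_\lambda\, d_- f_\lambda = 0$, so each summand and hence $\alpha(X)$ is generalized holomorphic on $U$. Since $m$ was arbitrary and the notion is local, $\alpha(X) \in \mathcal{O}_M(M)$.

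I do not anticipate any real obstacle: the only subtlety is making sure we really use the explicit canonical-coordinate trivialization (so that the frames are generalized holomorphic), after which the conclusion reduces to the elementary fact that $\mathcal{O}_M$ is a sheaf of rings.
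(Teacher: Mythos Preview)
Your proof is correct and follows essentially the same route as the paper: work in canonical coordinates, where generalized holomorphic $1$-forms and vector fields are $\mathcal{O}_M$-spans of $\{dz_\lambda\}$ and $\{\partial/\partial z_\lambda\}$ respectively, so the pairing is a sum of products of generalized holomorphic functions. The paper's proof is just a terser version of yours, asserting these local descriptions and declaring the result clear without spelling out the Leibniz-rule step.
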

\begin{proof}
Observe that the spaces of generalized holomorphic $1$-forms and vector fields are
\[\mathrm{span}_{\mathcal{O}_M}\{dz_1,\cdots,dz_k\},\qquad \mathrm{span}_{\mathcal{O}_M}\{\frac{\partial}{\partial z_1},\cdots,\frac{\partial}{\partial z_k}\}.\]
The result is clear.\end{proof}

Consider the two particular cases in Examples \ref{c} and \ref{s}. When $M$ is a complex manifold,  we have $G^*M=(T^*M)^{1,0}$ and $GM=T^{1,0} M$, which are the holomorphic cotangent and tangent bundles of $M$, respectively. When $M$ is a symplectic manifold, then $G^*M$ and $GM$  both degenerate to a vector bundle of rank $0$ on $M$.

\begin{Ex}
For a  regular holomorphic Poisson manifold $(M,\mathcal{J})$, following Example \ref{holo Poisson}, the $-i$-eigenbundle of $\mathcal{J}$ is 
\[L_{-}=\{X+\frac{\pi^\sharp(\xi)}{4}+\xi|X\in T^{0,1} M,\xi\in (T^*M)^{1,0}\}.\]
Thus we get $G^*M=L_{-}\cap T^*_{\mathbbm{C}} M=\mathrm{ker} \pi^\sharp\cap (T^*M)^{1,0}$ and $GM=T^{1,0} M$. 
Locally, a generalized holomorphic $1$-form is of the form $\sum_i f_i(z)dz_i$ and a generalized holomorphic vector field is of the form $\sum_j g_j(z)\frac{\partial}{\partial z_j}$, where $f_i$ and $g_j$ are holomorphic Casimir functions on $M$.
\end{Ex}

\begin{Rm}
 Although $GM$ and $G^*M$ are not dual to each other as vector bundles, they are dual to each other as $\mathcal{O}_M$-sheaves of their generalized holomorphic sections, where $\mathcal{O}_M$ is the ring  of local generalized holomorphic functions on $M$. 
 \end{Rm}
 
 Denote by $\Gamma_m(E)$ the space of local generalized holomorphic sections around $m$ and $\mathcal{O}_m$ the space of local generalized holomorphic functions around $m$.  Two local sections $\phi,\psi\in \Gamma_m(E)$ are said to be \emph{ equivalent}, denoted by $\phi \sim \psi$, if and only if 
\[\phi(m)=\psi(m),\qquad \phi_{*m}=\psi_{*m}.\]
We denote the equivalence class of $\phi$ at $m$ as $[\phi]_m$, which is called the \emph{ (first) jet} of $\phi$ at $m$.
Define
\[\mathfrak{J}^1 E=\{[\phi]_m| m\in M,\phi\in \Gamma_m(E)\},\]
which we expect to be a generalized holomorphic vector bundle fitting into a short exact sequence. This could possibly be true when $M$ is a regular generalized complex manifold since the dimensions of fibers in $\mathfrak{J}^1 E$ may jump at points with different types of generalized complex structures.

By Proposition \ref{Pro 2.29},   
a function $f\in \mathcal{O}_m$ if and only if $(df)_m\in (L_-\cap T^*_{\mathbbm{C}} M)_m={G^*_m} M$. So  for a fixed $e_m\in E_m$, there is a one-one correspondence between the space of jets at $m$ 
\[\{[\phi]_m|\phi\in \Gamma_m(E),\phi(m)=e_m\}\]
and $G^*_m M\otimes E_m$.  Thus we get the short exact sequence of complex linear spaces
\[0\to G^*_m M\otimes E_m\to \mathfrak{J}_m^1 E\to E_m\to 0,\]
where $\mathfrak{J}^1_m E=\{[\phi]_m|\phi\in \Gamma_m(E)\}$.

Next, we shall prove that  the jet bundle $\mathfrak{J}^1 E$ is a generalized holomorphic vector bundle when $E$ is a generalized holomorphic vector bundle. Let us first give the description of the equivalence relation on generalized holomorphic sections in local coordinates. For $\phi,\psi\in \Gamma_m(E)$ with $\phi(m)=\psi(m)$, note that $\phi$ and $\psi$ are generalized holomorphic sections around $m$. We have $\phi\sim \psi$ if and only if  there exists a local coordinate system $(E|_{U_i},\varphi_i;z,p,q,u^\alpha)$ such that
\[\frac{\partial u^\alpha\circ \phi}{\partial z_{\lambda}}|_m=\frac{\partial u^\alpha\circ \psi}{\partial z_{\lambda}}|_m,\qquad \alpha=1,\cdots r; \lambda=1,\cdots,k,\]
where $\varphi_i:E|_{U_i}\to U_i\times \mathbbm{C}^r$ is a local trivialization of $E$, $(z,p,q)$ is a canonical coordinate system on $U_i$ and $(u^\alpha)_{\alpha=1}^r$ is a coordinate system along the fiber. 
By Lemma \ref{coordinates2}, this description of equivalence does not depend on the local trivialization we choose.

A coordinate system on $E$ will generate a coordinate system on $\mathfrak{J}^1 E$. 
For a  local coordinate system $(E|_{U_i},\varphi_i;z,p,q,u^\alpha)$ of $E$, the induced coordinate system on $\mathfrak{J}^1 E$ is \[(U_i^1,\varphi_i;z,p,q,u^\alpha,u_\lambda^\alpha),\] where
\[U_i^1=\{[\phi]_m|\phi(m)\in E|_{U_i},m\in U_i\}\]
and $u_\lambda^\alpha:U_i^1\to \mathbbm{C}$ is given by
\[u_\lambda^\alpha([\phi]_m)=\frac{\partial u^\alpha\circ \phi}{\partial z_{\lambda}}|_m.\]
This justifies that $\mathfrak{J}^1 E$ is a smooth manifold.

\begin{Thm} Let $E$ be a generalized holomorphic vector bundle over a regular generalized complex manifold $M$. Then $\mathfrak{J}^1 E$ is also a generalized holomorphic vector bundle over $M$ and it fits into the following short exact sequence
\begin{eqnarray}\label{ex se}
0\to G^*M\otimes E\xrightarrow{I} \mathfrak{J}^1 E\xrightarrow{\pi^1} E\to 0
\end{eqnarray}
of generalized holomorphic vector bundles over $M$.
\end{Thm}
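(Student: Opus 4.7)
The plan is to construct $\mathfrak{J}^1 E$ chart by chart using the coordinate system $(z,p,q,u^\alpha,u^\alpha_\lambda)$ introduced just before the statement, verify that the transition functions between two such charts are complex-linear in the fibre coordinates with generalized holomorphic coefficients (so that by Proposition \ref{Explicit} the result is a generalized holomorphic vector bundle), and then define $I$ and $\pi^1$ explicitly and check exactness. This parallels the classical construction of the holomorphic jet bundle, with canonical coordinates from the generalized Darboux theorem replacing holomorphic ones.

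For the transition computation, fix two canonical coordinate charts $(U;z,p,q)$ and $(V;z',p',q')$ with $U\cap V\neq\varnothing$ and trivializations of $E$ giving local frames $\{e_\alpha\}$ and $\{f_\beta\}$ with transition matrix $\varphi_{VU}=(A^\beta_\alpha)$. By Proposition \ref{Explicit} each $A^\beta_\alpha$ is a generalized holomorphic function, and by Lemma \ref{coordinates2} the primed coordinates depend only on the unprimed $z$'s (not on $\bar z,p,q$), so in particular $\partial z_\lambda/\partial z'_\mu$ and $\partial A^\beta_\alpha/\partial z_\lambda$ are again generalized holomorphic. For a generalized holomorphic local section $\phi$ we have $v^\beta\circ\phi = \sum_\alpha A^\beta_\alpha\cdot(u^\alpha\circ\phi)$, and the chain rule, trimmed by Lemma \ref{coordinates2} so that only the $\partial/\partial z$-terms survive, gives
\[
v^\beta_\mu([\phi]_m) = \sum_{\lambda,\alpha}\frac{\partial z_\lambda}{\partial z'_\mu}(m)\left(\frac{\partial A^\beta_\alpha}{\partial z_\lambda}(m)\,u^\alpha([\phi]_m)+A^\beta_\alpha(m)\,u^\alpha_\lambda([\phi]_m)\right).
\]
This is manifestly $\mathbb{C}$-linear in $(u^\alpha,u^\alpha_\lambda)$ with generalized holomorphic coefficients, so Proposition \ref{Explicit} promotes $\mathfrak{J}^1 E$ to a generalized holomorphic vector bundle.

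I will then define $\pi^1([\phi]_m):=\phi(m)$, which in coordinates is the projection $(z,p,q,u^\alpha,u^\alpha_\lambda)\mapsto(z,p,q,u^\alpha)$, a surjective generalized holomorphic homomorphism. The map $I$ is defined intrinsically by
\[
I\!\left((df)_m\otimes e_m\right) := [f\phi]_m,
\]
where $f\in\mathcal{O}_m$ satisfies $f(m)=0$ and $\phi$ is any local generalized holomorphic section with $\phi(m)=e_m$; since $d_-f=0$ and $\overline{\partial}_E\phi=0$ by Theorem \ref{3.11}, $f\phi$ is generalized holomorphic, its jet depends only on $(df)_m$ (which lies in $G^*_m M$ by Proposition \ref{Pro 2.29}) and on $e_m$. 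In the frame above $I$ sends $dz_\lambda\otimes e_\alpha$ to the jet with every $u^\alpha$ zero and $u^\alpha_\lambda=1$; specialising the transition formula to $u^\alpha=0$ produces precisely the transition law of $G^*M\otimes E$ obtained from Lemma \ref{pairingnew} and Proposition \ref{Explicit}, so $I$ is globally defined and a generalized holomorphic homomorphism. Exactness of \eqref{ex se} is then immediate from these coordinate descriptions: $I$ injects onto $\{u^\alpha=0\}=\ker\pi^1$, and $\pi^1$ is surjective.

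The hard part will be the transition-function computation: it hinges on Lemma \ref{coordinates2}, which forces coordinate changes among canonical charts to be independent of $\bar z,p,q$. Without that constraint, the cross-term $\partial A^\beta_\alpha/\partial z_\lambda$ would not obviously be annihilated by $d_-$, and the whole construction would collapse. Once this point is granted, the remaining work is formal linear algebra modeled on the classical holomorphic-jet case.
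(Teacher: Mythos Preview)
Your proposal is correct and follows essentially the same route as the paper: both arguments compute the jet-bundle transition functions via Lemma \ref{coordinates2} and the chain rule to verify the criteria of Proposition \ref{Explicit}, and both define $\pi^1$ as evaluation. The only difference is cosmetic: the paper defines $I$ directly in coordinates (sending $\sum f_{\lambda\alpha}\,dz_\lambda\otimes e_\alpha$ to the jet with $u^\alpha=0$ and $u^\alpha_\lambda=f_{\lambda\alpha}$) and then checks chart-independence, whereas you give the intrinsic formula $I((df)_m\otimes e_m)=[f\phi]_m$ and read off the same coordinate expression---either way the verification of exactness is immediate.
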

\begin{proof}
To see that $\mathfrak{J}^1 E$ is a generalized holomorphic vector bundle,  it suffices to check  the two conditions in Proposition \ref{Explicit}.

Suppose that $(U_i^1,\varphi_i;z,p,q,u^\alpha,u_\lambda^\alpha)$ and $(U_j^1,\varphi_j;z',p',q',v^\beta,v_\mu^\beta)$ are two induced coordinate systems on $\mathfrak{J}^1 E$ and $U_i\cap U_j\neq \varnothing$. Since $E$ is a generalized holomorphic vector bundle, each entry $A_{\alpha \beta}:U_i\cap U_j\to \mathbbm{C}$ of its transition matrix $\varphi_{ij}=(A_{\alpha \beta})_{r\times r}$ is generalized holomorphic. Moreover, we have 
\begin{eqnarray}\label{coordinate change}
v^\beta=\sum_\alpha A_{\beta \alpha}(z,p,q) u^\alpha.
\end{eqnarray}
For any local generalized holomorphic section $\phi\in \Gamma_m(E)$, since
\[\frac{\partial v^\beta \circ \phi}{\partial \bar{z}_\lambda}=\frac{\partial v^\beta \circ \phi}{\partial p_\nu}=\frac{\partial v^\beta \circ \phi}{\partial q_\nu}=0,\]
we get
\[\frac{\partial v^\beta \circ \phi}{\partial z'_\mu}|_m=\sum_{\lambda} \frac{\partial v^\beta \circ \phi}{\partial z_\lambda}|_m \frac{\partial z_\lambda}{\partial z'_\mu}|_m=\sum_{\alpha,\lambda}\big( A_{\beta \alpha}|_m\frac{\partial u^\alpha \circ \phi}{\partial z_\lambda}|_m +\frac{\partial A_{\beta \alpha}}{\partial z_\lambda}|_m u^\alpha (\phi(m))\big)
\frac{\partial z_\lambda}{\partial z'_\mu}|_m.\]
Therefore, we get
\begin{eqnarray}\label{coordinate change2}
v_\mu^\beta=\sum_{\alpha,\lambda}\big( A_{\beta \alpha} u_\lambda^\alpha+\frac{\partial A_{\beta \alpha}}{\partial z_\lambda} u^\alpha \big)
\frac{\partial z_\lambda}{\partial z'_\mu}.
\end{eqnarray}
We see $(v^\beta,v_\mu^\beta)$ is complex linear with respect to $(u^\alpha,u_\lambda^\alpha)$ and the coefficients are generalized holomorphic functions. 
This proves that $\mathfrak{J}^1 E$ is a generalized holomorphic vector bundle over $M$.

To see the exactness of (\ref{ex se}), first it is obvious that the map
\[\pi^1: \mathfrak{J}^1 E\to E, \qquad [\phi]_m\mapsto \phi(m)\]
is a homomorphism between two generalized holomorphic vector bundles. 

Then we define the map $I: G^*M\otimes E\to \mathfrak{J}^1 E$ in a local coordinate system $(U_i^1,\varphi_i;z,p,q,u^\alpha,u^\alpha_\lambda)$ and show that it is independent of the choice of local trivializations. Let $\varphi_i:E|_{U_i}\to U_i\times \mathbbm{C}^r$ be a local trivialization of $E$ and let $\{e_\alpha\}$ be a basis of sections on $E$ given by $\varphi_i$ (i.e., $u^\alpha(e_\beta)=\delta_\beta^\alpha$). Define 
\[I:G^*M|_{U_i}\otimes E|_{U_i}\to \mathfrak{J}^1 E|_{U_i},\qquad \sum_{\lambda, \alpha} f_{\lambda \alpha} dz_\lambda \otimes e_\alpha \mapsto [\phi]_m,\quad \phi(m)=0_m,u_\lambda^\alpha([\phi]_m)=f_{\lambda \alpha}.\]
It is a homomorphism between generalized holomorphic vector bundles  from the local expressions.
Let $(U_j^1,\varphi_j;z',p',q',v^\beta,v_\mu^\beta)$ be another derivative coordinate system and let $\{e'_\beta\}$ be a basis of local sections on $E$ given by $\varphi_j$. By (\ref{coordinate change}), we have $e_\alpha=\sum_\beta A_{\beta \alpha} e'_\beta$. Then we have
\[\sum_{\lambda,\alpha} f_{\lambda \alpha} d z_\lambda\otimes e_\alpha=\sum_{\lambda,\alpha,\mu,\beta}f_{\lambda\alpha}\frac{\partial z_\lambda}{\partial z'_\mu} A_{\beta \alpha} d z'_\mu\otimes e'_\beta.\]
Let $\phi'\in \Gamma_m(E)$ such that $\phi'(m)=0_m$ and \[v_\mu^\beta([\phi']_m)=\sum_{\lambda,\alpha}f_{\lambda\alpha}\frac{\partial z_\lambda}{\partial z'_\mu} A_{\beta \alpha}.\]
To prove $I$ is independent of the choice of local trivializations, we need to check $v_\mu^\beta([\phi]_m)=v_\mu^\beta([\phi']_m)$. Actually, by (\ref{coordinate change2}) and $u^\alpha([\phi]_m)=0$, we get
\[v_\mu^\beta([\phi]_m)=\sum_{\alpha,\lambda} A_{\beta \alpha} u^\alpha_\lambda([\phi]_m) \frac{\partial z_\lambda}{\partial z'_\mu}=\sum_{\alpha,\lambda} A_{\beta \alpha} f_{\lambda \alpha} \frac{\partial z_\lambda}{\partial z'_\mu}=v_\mu^\beta([\phi']_m).\]
So $I$ is well-defined. It is direct to see $\mathrm{img} I=\mathrm{ker} \pi^1$.
\end{proof}

\section{Atiyah classes}
We shall introduce the notion  of generalized holomorphic connections on a generalized holomorphic vector bundle. The obstruction class for the existence of such a connection is called the \emph{ Atiyah class} of this generalized holomorphic vector bundle.
\begin{Def}
Let $E$ be a generalized holomorphic vector bundle over  $(M,\mathcal{J})$. A \emph{ generalized holomorphic connection} on $E$ is a $\mathbbm{C}$-linear map (of sheaves) $D:E\to G^*M\otimes E$ such that
\[D(fs)=d_{+}f\otimes s+fD(s)\]
for all local generalized holomorphic functions $f$ on $M$ and all local generalized holomorphic sections $s$ of $E$, where $d_+$ is the Lie algebroid differential of the $+i$-eigenbundle  $L_+$ of $\mathcal{J}$ as in \eqref{d+-}.
\end{Def}

Here, we use $E$ and $G^*M$ to denote both the vector bundles and the sheaves of their generalized holomorphic sections. Observe that $d_{-} (d_{+} f)=d_{+}(d_{-}f)=0$, for any local generalized holomorphic function $f$ on $M$. So $d_{+}f$ is a generalized holomorphic section of $G^*M$.

\begin{Lem}\label{perserve}
Let $E$ be a generalized holomorphic vector bundle over $M$ and $D: \Gamma(E)\to \Gamma(G^* M\otimes E)$ be a $T_{\mathbbm{C}} M$-connection valued in $G^*M\otimes E$
on $E$. Then $D$ is a generalized holomorphic connection  if and only if $D_X $, for any generalized holomorphic  vector field $X\in \Gamma(GM)$, sends generalized holomorphic sections of $E$ to generalized holomorphic sections of $E$.
\end{Lem}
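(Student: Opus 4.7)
The plan is to prove both implications by working locally in canonical coordinates. First I would unpack what ``$D$ is a generalized holomorphic connection'' means in this setting: because $D$ is already a $T_{\mathbbm{C}}M$-connection, the ordinary Leibniz rule $D(fs)=df\otimes s+f\,D(s)$ holds for any smooth $f$, and when $f$ is generalized holomorphic we have $d_-f=0$ and hence $df=d_+f$. Thus the Leibniz identity required of a generalized holomorphic connection is automatic, and the entire statement reduces to asking when the restriction of $D$ to the sheaf of generalized holomorphic sections of $E$ lands inside the sheaf of generalized holomorphic sections of $G^*M\otimes E$.

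Next I would fix $m\in M$, choose canonical coordinates $(U;z,p,q)$ around it, and pick a local trivialization $\varphi_i$ of $E$ whose associated frame $\{e_1,\ldots,e_r\}$ consists of generalized holomorphic sections. In this chart $\{\partial/\partial z_\lambda\}$ and $\{dz_\lambda\}$ are generalized holomorphic frames of $GM$ and $G^*M$ respectively, dual under the natural pairing, and Example~\ref{coordinates} together with Theorem~\ref{3.11} tells us that a section of $E$ (resp.\ of $G^*M\otimes E$) is generalized holomorphic if and only if its components in these frames are generalized holomorphic functions. Writing
\[D(s)=\sum_{\lambda,\alpha}D_{\lambda\alpha}\,dz_\lambda\otimes e_\alpha\]
on $U$, the forward direction is then immediate: if every $D_{\lambda\alpha}\in\mathcal{O}_M$ and $X=\sum_\lambda X^\lambda\,\partial/\partial z_\lambda$ is generalized holomorphic (so $X^\lambda\in\mathcal{O}_M$), then $D_X s=\sum_\alpha\bigl(\sum_\lambda X^\lambda D_{\lambda\alpha}\bigr)e_\alpha$ is an $\mathcal{O}_M$-combination of the generalized holomorphic sections $e_\alpha$, hence itself generalized holomorphic.

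For the converse I would specialize to the particular generalized holomorphic vector fields $X=\partial/\partial z_\mu$. The hypothesis then gives that $D_{\partial/\partial z_\mu}s=\sum_\alpha D_{\mu\alpha}\,e_\alpha$ is a generalized holomorphic section of $E$, and since $\{e_\alpha\}$ is a generalized holomorphic frame this forces $D_{\mu\alpha}\in\mathcal{O}_M$ for every $\mu,\alpha$. Consequently $D(s)$ is a generalized holomorphic section of $G^*M\otimes E$ on $U$, and as this is a purely local condition the conclusion extends over $M$ by patching the canonical charts.

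The main obstacle, such as it is, is organizational rather than technical: one must confirm that in canonical coordinates the frames $\{\partial/\partial z_\lambda\}$, $\{dz_\lambda\}$ and $\{e_\alpha\}$ are indeed generalized holomorphic (which follows from the Darboux form of $\mathcal{J}$, Example~\ref{coordinates} and Proposition~\ref{Explicit}), and that a section of $E$ or $G^*M\otimes E$ is generalized holomorphic precisely when its coefficients in such a frame are generalized holomorphic functions. Once these identifications are in hand, the proof collapses in each direction to a single contraction against $\partial/\partial z_\mu$.
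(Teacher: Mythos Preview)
Your proposal is correct and follows essentially the same line as the paper's proof: both reduce the question to showing that the local connection matrix consists of generalized holomorphic functions, and both extract this by contracting against generalized holomorphic vector fields. The only cosmetic difference is that the paper writes $D=d_{+}+A$ and argues with the connection form $A$ abstractly (invoking Lemma~\ref{pairingnew} for the forward direction and $d_{-}\langle A,X\rangle=0$ for the converse), whereas you work directly in canonical coordinates with the frames $\{\partial/\partial z_\lambda\}$, $\{dz_\lambda\}$, $\{e_\alpha\}$ and read off the coefficients $D_{\mu\alpha}$ by plugging in $X=\partial/\partial z_\mu$.
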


\begin{proof}
If $D$ is a generalized holomorphic connection on $E$, by  Lemma \ref{pairingnew}, we see that $D_X s$ is a generalized holomorphic section of $E$ if $s$ is a generalized holomorphic section of $E$. Conversely, locally, write $D=d_{+}+A$, where $A$ is a matrix-valued $1$-form. Since $\overline{\partial}_E(D_X s)=0$ if $\overline{\partial}_E s=0$ and $\overline{\partial}_E X=0$, we must have $d_{-}\langle A, X\rangle =0$. This implies that $A$ is a matrix-valued generalized holomorphic $1$-form on $M$. So $Ds$ takes values in generalized holomorphic sections of $G^*M\otimes E$ when $s$ is a generalized holomorphic section.
\end{proof}

\begin{Ex}
When $M$ is a complex manifold, a generalized holomorphic connection on $E$ is a holomorphic connection; When $M$ is a symplectic manifold, since $G^*M$ is of rank $0$, a generalized holomorphic connection can only be zero. 
\end{Ex}

\subsection{Atiyah class in the $\rm{\check{C}}$ech cohomology}

Let us first recall the definition of $\rm{\check{C}}$ech cohomology. Let $\mathcal{F}$ be a sheaf on a manifold $M$. Fix an open covering $M=\cup_{i\in I} U_i$  with $I$ an ordered set. Denote by $U_{i_0\cdots i_k}:=U_{i_0}\cap \cdots \cap U_{i_k}$. Then set
\[C^k(\{U_i\},\mathcal{F}):=\Pi_{i_0<\cdots < i_k} \Gamma(U_{i_0\cdots i_k},\mathcal{F}).\]
There is a natural differential 
\[\mathbbm{d}: C^k(\{U_i\},\mathcal{F})\longrightarrow C^{k+1}(\{U_i\},\mathcal{F}),\qquad \alpha=\Pi \alpha_{i_0\cdots i_k}\mapsto \mathbbm{d}\alpha\]
with \[(\mathbbm{d}\alpha)_{i_0\cdots i_{k+1}}=\sum_{j=0}^{k+1}(-1)^k\alpha_{i_0\cdots \hat{i_j}\cdots i_{k+1}}|_{U_{i_0\cdots i_{k+1}}}.\]
One obtains a complex $(C^\bullet(\{U_i\},\mathcal{F}),\mathbbm{d})$ by the fact that $\mathbbm{d}^2=0$. The cohomology  of this complex is called the \emph{$\rm{\check{C}}$ech cohomology} of the sheaf $\mathcal{F}$ with respect to the fixed covering $M=\cup_{i} U_i$.

We shall give an obstruction class of the existence of generalized holomorphic connections by using of the first $\rm{\check{C}}$ech cohomology group. Let $E$ be a generalized holomorphic vector bundle over a generalized complex manifold $M$. Consider the ring $\mathcal{O}_M$  of generalized holomorphic functions and the sheaf of matrix-valued generalized holomorphic $1$-forms on $M$, or the sheaf of generalized holomorphic sections of $G^*M\otimes \End(E)$.

With respect to the trivialization $\varphi_i: \pi^{-1}(U_i)\to U_i\times \mathbbm{C}^r$ in Definition \ref{Definition3.1} on $E$, we may write a local generalized holomorphic connection on $U_i\times \mathbbm{C}^r$ in the form $d_++A_i$, where $A_i$ is a matrix-valued generalized holomorphic 1-form on $U_i$. They can be glued to a global connection on $E$ if and only if 
\[\varphi_i^{-1}\circ (d_++A_i)\circ \varphi_i=\varphi_j^{-1}\circ (d_++A_j)\circ \varphi_j\]
on $U_{ij}$, which is equivalently to
\begin{eqnarray*}
\varphi_j^{-1}\circ A_j\circ \varphi_j-\varphi_i^{-1}\circ A_i\circ \varphi_i&=&\varphi_j^{-1}\circ (\varphi_{ij}^{-1}\circ d_+\circ \varphi_{ij}-d_+)\circ \varphi_j\\ &=&\varphi_j^{-1}\circ (\varphi_{ij}^{-1}d_+ (\varphi_{ij}))\circ \varphi_j,
\end{eqnarray*}
where $\varphi_{ij}=\varphi_i\circ \varphi_j^{-1}$. By the relation $\varphi_{ij}\circ \varphi_{jk}\circ \varphi_{ki}=1$, the left hand side of the above equation is actually a $1$-coboundary in the $\rm \check{C}$ech cohomology. This suggests us to define a class which  measures the obstruction to a generalized holomorphic connection on a generalized holomorphic vector bundle. 

\begin{Def}
The \emph{ Atiyah class} $A(E)\in \mathrm{H}^1(M,G^*M\otimes \End(E))$
of a generalized holomorphic vector bundle $E$ on a generalized complex manifold $(M,\mathcal{J})$ is given by the $\rm \check{C}$ech cocycle
\[A(E)=\{U_{ij},\varphi_{j}^{-1}\circ (\varphi_{ij}^{-1}d_+ (\varphi_{ij}))\circ \varphi_j\},\]
where $d_+$ is the Lie algebroid differential of the $+i$-eigenbundle  $L_+$ of $\mathcal{J}$.
\end{Def}
\begin{Thm}
With notations above, 
 $E$ admits a generalized holomorphic connection if and only if  $A(E)\in \mathrm{H}^1(M,G^*M\otimes \End(E))$ vanishes.
\end{Thm}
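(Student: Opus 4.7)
My plan is to interpret the theorem as an obstruction-theoretic statement: global generalized holomorphic connections correspond to coherent modifications of a system of canonical local connections, and the obstruction to such modification is precisely $A(E)$. The motivating computation already written just before the definition of $A(E)$ essentially does most of the work.

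First, I would observe that each trivialization $\varphi_i:E|_{U_i}\to U_i\times\mathbbm{C}^r$ yields a tautological generalized holomorphic connection $D_i:=\varphi_i^{-1}\circ d_+\circ\varphi_i$ on $E|_{U_i}$, since $d_+$ is a generalized holomorphic connection on the trivial bundle (and $d_-(d_+f)=0$ guarantees $d_+f$ is a generalized holomorphic section of $G^*M$). On an overlap $U_{ij}$, using $\varphi_i=\varphi_{ij}\circ\varphi_j$ and the Leibniz rule for $d_+$, a one-line computation gives
\[D_i-D_j=\varphi_j^{-1}\circ(\varphi_{ij}^{-1}d_+(\varphi_{ij}))\circ\varphi_j=\alpha_{ij},\]
which is precisely the cochain defining $A(E)$. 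The cocycle identity $\alpha_{ij}+\alpha_{jk}=\alpha_{ik}$ on $U_{ijk}$ then follows either from direct manipulation using $\varphi_{ik}=\varphi_{ij}\varphi_{jk}$ and Leibniz, or equivalently from the transitive identity $(D_i-D_j)+(D_j-D_k)=D_i-D_k$, confirming that $A(E)$ is a well-defined Čech cohomology class.

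With this identification in hand, the theorem becomes essentially formal. For the ``if'' direction, if $A(E)=0$, then (after possibly refining the cover, which does not affect the cohomology class) there exist $\beta_i\in\Gamma(U_i,G^*M\otimes\End(E))$ with $\alpha_{ij}=\beta_j-\beta_i$; setting $D|_{U_i}:=D_i-\beta_i$, the identity $D_i-D_j=\alpha_{ij}$ forces the local operators to agree on overlaps, so they glue to a global $\mathbbm{C}$-linear sheaf map $D:E\to G^*M\otimes E$, which is a generalized holomorphic connection as each local piece is. Conversely, if a global generalized holomorphic connection $D$ exists, then $\beta_i:=D_i-D\in\Gamma(U_i,G^*M\otimes\End(E))$ satisfies $\beta_j-\beta_i=D_j-D_i=-\alpha_{ij}$, exhibiting the cocycle $\alpha_{ij}$ as a Čech coboundary, so $A(E)=0$.

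The only point requiring care — rather than a genuine obstacle — is to verify that $\alpha_{ij}$, the $\beta_i$, and the $D_i$ all truly take values in the sheaves of generalized holomorphic objects rather than merely smooth ones. This is ensured by Proposition \ref{Explicit} (the transition entries $A_{\lambda\mu}$ are generalized holomorphic, so $d_-(\varphi_{ij}^{-1}d_+\varphi_{ij})=0$) together with Lemma \ref{perserve} (which characterizes generalized holomorphic connections via preservation of generalized holomorphic sections). Beyond this bookkeeping, the argument parallels the classical holomorphic case verbatim.
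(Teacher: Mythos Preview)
Your approach is essentially the same as the paper's: the paper does not supply a separate proof of this theorem, since the computation immediately preceding the definition of $A(E)$ already shows that gluing the local connections $\varphi_i^{-1}\circ(d_++A_i)\circ\varphi_i$ is equivalent to the cocycle $\{\varphi_j^{-1}(\varphi_{ij}^{-1}d_+\varphi_{ij})\varphi_j\}$ being a coboundary, and your write-up simply makes this explicit by taking $A_i=0$ and tracking the resulting $D_i$. One small slip: with $D_i-D_j=\alpha_{ij}$ and $\alpha_{ij}=\beta_j-\beta_i$, the glued connection should be $D|_{U_i}=D_i+\beta_i$ (not $D_i-\beta_i$), since $(D_i+\beta_i)-(D_j+\beta_j)=\alpha_{ij}-(\beta_j-\beta_i)=0$; your converse direction already has the signs right.
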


\subsection{Atiyah class as an extension class}

Let $R$ be a commutative ring. For $R$-modules $M$ and $N$, let us recall the  extension groups $\mathrm{Ext}_R^n(M,N), n\geq 1$. Consider an extension  of $M$ by $N$ with length $n$, which is a long exact sequence
\[0\to N\to E_{n-1}\to \cdots\to E_0\to M\to 0 \]
of $R$-modules.
An \emph{elementary equivalence} of two extensions  of $M$ by $N$ with length $n$ is a commutative diagram:
\begin{equation*}\label{eq:ext1}
\CD
  0 @> >>  N @>  >>E_{n-1} @> >>\cdots @> >> E_0 @> >> M @> >> 0 \\
  @. @V \mathrm{Id} VV @V  VV @V  VV @V  VV@V \mathrm{Id} VV @. \\
 0 @> >>  N @>  >>E'_{n-1} @> >>\cdots @> >> E'_0 @> >> M @> >> 0,
 \endCD
\end{equation*}
where all the vertical maps $E_i\to E'_i$ for $i=0,\cdots,n-1$ are isomorphisms.  Elementary equivalence relations generate an equivalence relation. Then, for $n\geq 1$, $\mathrm{Ext}_R^n(M,N)$ is the Abelian group of equivalence classes of extensions of $M$ by $N$ with length $n$.

We shall use the first extension group to give an Atiyah class which describes the existence of a generalized holomorphic connection on a generalized holomorphic vector bundle over a regular generalized complex manifold $M$.

\begin{Def}
Let $E$ be a generalized holomorphic vector bundle over a regular generalized complex manifold $M$. The \emph{ Atiyah class} of $E$ is defined to be the first extension class 
\[A(E)\in \mathrm{Ext}_{\mathcal{O}_M}^1(E,G^*M\otimes E)\]
of the short exact sequence:
\[0\to G^*M\otimes E\xrightarrow{I} \mathfrak{J}^1 E\xrightarrow{\pi^1} E\to 0.\]
\end{Def}

\begin{Thm}
Let $E$ be a  generalized holomorphic vector bundle over a regular generalized complex manifold $M$. 
Then $E$ admits a generalized holomorphic connection if and only if  $A(E)=0$, namely, the above short exact sequence  splits.
\end{Thm}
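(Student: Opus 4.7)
The plan is to identify splittings of the jet sequence with generalized holomorphic connections on $E$, in the spirit of Atiyah's classical construction. By definition $A(E)$ is the class in $\mathrm{Ext}^1_{\mathcal{O}_M}(E, G^*M \otimes E)$ of the short exact sequence
\[ 0 \to G^*M \otimes E \xrightarrow{I} \mathfrak{J}^1 E \xrightarrow{\pi^1} E \to 0, \]
so $A(E) = 0$ is equivalent to the existence of an $\mathcal{O}_M$-linear splitting $\sigma \colon E \to \mathfrak{J}^1 E$ with $\pi^1 \circ \sigma = \mathrm{id}_E$. It therefore suffices to set up a bijection between such splittings and generalized holomorphic connections $D \colon E \to G^*M \otimes E$.

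Given a splitting $\sigma$, I would define $D(s) := I^{-1}([s] - \sigma(s))$ on local generalized holomorphic sections $s$; this is well-defined because $\pi^1([s] - \sigma(s)) = s - s = 0$ places $[s] - \sigma(s)$ into $\mathrm{img}\,I$ by exactness. Conversely, given a connection $D$, I would set $\sigma(s) := [s] - I(D(s))$; then $\pi^1 \circ \sigma = \mathrm{id}_E$ follows at once from $\pi^1 \circ I = 0$. These two assignments are manifestly inverse to each other, so the whole question reduces to showing that $\mathcal{O}_M$-linearity of $\sigma$ corresponds exactly to the Leibniz rule for $D$.

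The crucial identity that makes this translation work is
\[ [fs]_m = f(m)\,[s]_m + I_m\bigl((d_+ f)(m) \otimes s(m)\bigr), \]
for any $f \in \mathcal{O}_M$ and any generalized holomorphic section $s$ of $E$. I would verify it by a direct computation in canonical coordinates $(z, p, q)$ on $M$ and the induced derivative coordinates $(u^\alpha, u^\alpha_\lambda)$ on $\mathfrak{J}^1 E$ introduced in Section 3.2: writing $s = \sum_\alpha s^\alpha e_\alpha$, one has $u^\alpha([fs]_m) = f(m) s^\alpha(m)$ and $u^\alpha_\lambda([fs]_m) = (\partial_{z_\lambda} f)(m)\, s^\alpha(m) + f(m)\,(\partial_{z_\lambda} s^\alpha)(m)$, and the right-hand side matches this termwise via the explicit formula for $I$ together with the expansion $d_+ f = \sum_\lambda (\partial_{z_\lambda} f)\, dz_\lambda$ (which holds because $d_- f = 0$ forces $df = d_+ f$ to be a pure-$dz$ combination in canonical coordinates, as in Example \ref{coordinates}). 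Once this identity is in hand, $D(fs) = d_+ f \otimes s + fD(s)$ and $\sigma(fs) = f\sigma(s)$ fall out of each other by a line of algebra.

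The main obstacle will be the bookkeeping needed to interpret the whole construction at the sheaf level of generalized holomorphic sections, as demanded by $\mathrm{Ext}^1_{\mathcal{O}_M}$: one must confirm that $s \mapsto [s]$ sends generalized holomorphic sections of $E$ to generalized holomorphic sections of $\mathfrak{J}^1 E$, and conversely that a generalized holomorphic connection $D$ yields a splitting whose image sits inside the generalized holomorphic sections. The latter is precisely what Lemma \ref{perserve} supplies, since $D_X$ preserves generalized holomorphicity for $X \in \Gamma(GM)$, so that $I(D(s))$ is generalized holomorphic whenever $s$ is. The regularity assumption on $M$ is used throughout so that canonical coordinates and the local structure of $\mathfrak{J}^1 E$ from Section 3.2 are available.
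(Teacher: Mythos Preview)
Your proposal is correct and follows essentially the same route as the paper: both directions use the classical Atiyah bijection $D(s)=[s]-\sigma(s)$ (resp.\ $\sigma(s)=[s]-D(s)$) between splittings of the jet sequence and connections. The only cosmetic difference is that the paper, in the converse direction, defines the splitting $S$ pointwise on fibers and then verifies it is a homomorphism of generalized holomorphic vector bundles by writing out its matrix in canonical coordinates, whereas you package the same computation as the sheaf-level identity $[fs]_m=f(m)[s]_m+I_m((d_+f)(m)\otimes s(m))$ and invoke Lemma~\ref{perserve}.
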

\begin{proof}
If $S:E\to \mathfrak{J}^1 E$ is a splitting of (\ref{ex se}), then $D: \Gamma(E)\to \Gamma(G^*M\otimes E)$ defined by
\[D(\phi)=[\phi]-S(\phi)\]
is a generalized holomorphic connection on $E$, where $\phi$ is any local generalized holomorphic section of $E$.
Conversely, let $D$ be a generalized holomorphic connection on $E$. Define
\[S: E\to \mathfrak{J}^1 E,\qquad s_m\mapsto [\phi]_m-D(\phi)|_{m},\qquad \forall \phi\in \Gamma_m (E), s. t. \phi(m)=s_m.\]
First we check that $S$ is well-defined. It is equivalent to check $[\phi]_m-D(\phi)|_{m}=0_m$ when $\phi(m)=0_m$.
Under a local coordinate system $(U_i^1,\varphi_i;z,p,q,u^\alpha,u_\lambda^\alpha)$, we have $\phi=\sum_\alpha (u^\alpha\circ \phi) e_\alpha$, where $\{e_\alpha\}$ is a basis of local sections on $E$ given by $\varphi_i$. Then 
\[D(\phi)=\sum_\alpha (u^\alpha\circ \phi) D(e_\alpha)+d(u^\alpha\circ \phi)\otimes e_\alpha.\]
Restricting at $m$, we have
\[D(\phi)|_{m}=\sum_{\alpha,\lambda} \frac{\partial u^\alpha\circ \phi}{\partial z_\lambda} |_m dz_\lambda\otimes e_\alpha.\]
This implies that 
\[u_\lambda^\alpha([\phi]|_m-D(\phi)|_m)=\frac{\partial u^\alpha\circ \phi}{\partial z_\lambda} |_m-\frac{\partial u^\alpha\circ \phi}{\partial z_\lambda} |_m=0.\]
It is obvious that $u^\alpha([\phi]|_{m}-D(\phi)|_m)=0$. Therefore, we prove that $[\phi]_m-D(\phi)|_{m}=0_m$.
So $S$ is well-defined and it is complex linear restricting on each fiber.

Next, we shall check that $S$ is a homomorphism  between two generalized holomorphic vector bundles (see Definition \ref{homomorphism}). Let $\{e_\alpha\}$ be a basis of local generalized holomorphic sections on $E$. 
Suppose 
\[D(e_\alpha)=\sum_{\beta, \lambda} A_{\alpha\beta\lambda} dz_\lambda\otimes e_\beta,\]
where each $A_{\alpha\beta\lambda}:U_i\to \mathbbm{C}$ is a generalized holomorphic function. For $\phi\in \Gamma_m(E)$ and $\phi(m)=s_m$, we have
\[u_\lambda^\alpha([\phi]_m-D(\phi)_{|m})= -\sum_\beta (u^\beta \circ \phi)(m)A_{\beta \alpha \lambda}(m).\]
Since $u^\alpha([\phi]_m-D(\phi)|_{m})=u^\alpha\circ \phi(m)$ and noticing that $E|_{U_i}\cong U_i\times \mathbbm{C}^r$ and $\mathfrak{J}^1 E|_{U_i}\cong U_i\times \mathbbm{C}^{r+kr}$,
we have that locally $S$ can be expressed as 
\[S: U_i\times \mathbbm{C}^r \to U_i\times \mathbbm{C}^{r+rk},\qquad (z,p,q,w)\mapsto \big(z,p,q,w,-\sum_{\beta} w_\beta A_{\beta \alpha \lambda}(z,p,q)\big),\]
where $w=(w_1,\cdots,w_r)$ is the coordinate on $\mathbbm{C}^r$. This implies that $S$ is a homomorphism between generalized holomorphic vector bundles. 
It is easy to see $\pi^1\circ S=\mathrm{Id}$. Therefore, $S$ is a splitting of (\ref{ex se}).
\end{proof}

\subsection{Atiyah class in the Lie algebroid cohomology}

Let $A$ be a Lie algebroid and $(E,\nabla)$ an $A$-module  as defined in Definition \ref{3.6}.  Then we have a complex $(C^\bullet(A,E),d_E)$, where  $C^k(A,E)=\Gamma(\wedge^k A^*\otimes E)$ and the differential 
\[d_E: C^k(A,E)\longrightarrow C^{k+1}(A,E),\]
is given by
\begin{eqnarray*}
d_E\omega(a_1,\cdots,a_{k+1})&=&\sum_{i=1}^{k+1}(-1)^{i+1} \nabla_{a_i} \omega(a_1,\cdots,\widehat{a_i},\cdots,a_{k+1})\\ && \sum_{i< j} (-1)^{i+j}\omega([a_i,a_j],\cdots,\widehat{a_i},\cdots,\widehat{a_j},\cdots,a_{k+1}),
\end{eqnarray*}
for $a_i\in \Gamma(A)$.
The cohomology  of this complex is called the \emph{Lie algebroid cohomology}  of $A$ with coefficients in $E$, denoted by $\mathrm{H}^\bullet (A,E)$.

We have another way to introduce the Atiyah class in this cohomology.
A Lie pair $(P,A)$ is a Lie algebroid $P$ with a Lie subalgebroid $A$. 
Let us recall the construction of the Atiyah class of a Lie pair following \cite{CSX}. For a Lie pair $(P,A)$ and an $A$-module $(E,\nabla)$, 
\begin{itemize}
\item [\rm{(1)}] choose a $P$-connection $\hat{\nabla}: \Gamma(E)\to \Gamma(P^*\otimes E)$ which extends the flat $A$-connection $\nabla$ on $E$. 
\item [\rm{(2)}] The curvature $R^{\hat{\nabla}}\in \Gamma(\wedge^2 P^*\otimes \End(E))$ induces a tensor
\[R^{\hat{\nabla}}\in \Gamma(A^*\otimes (P/A)^*\otimes \End(E))\] as follows:
\[R^{\hat{\nabla}}(a,\tilde{u})e=\hat{\nabla}_a\hat{\nabla}_u e-\hat{\nabla}_u \hat{\nabla}_a e-\hat{\nabla}_{[a,u]} e,\qquad \forall a\in \Gamma(A),\tilde{u}\in \Gamma(P/A),e\in \Gamma(E).\]
\item[\rm{(3)}] $R^{\hat{\nabla}}$ is a $1$-cocycle in the cohomology of $A$ with coefficients in the $A$-module $A^\perp\otimes \End(E)$. Moreover, its cohomology class does not depend on the choice of the $P$-connection $\hat{\nabla}$ extending the flat $A$-connection $\nabla$. Write \[\alpha:=[R^{\hat{\nabla}}]\in \mathrm{H}^1(A,A^{\perp}\otimes \End(E)).\] \end{itemize}
This cohomology class $\alpha$ is called the \emph{ Atiyah class} of a Lie pair $(P,A)$ with respect to the $A$-module $E$. 
It vanishes if and only if there is an $A$-compatible $P$-connection on $E$; see \cite{CSX} for more details.

For our case,  suppose that $E$ is a generalized holomorphic vector bundle over a regular generalized complex manifold $M$. Then $E$ is an $L_{-}$-module and thus a $\rho(L_{-})$-module given by
\[\nabla_{\rho(l)} e:=\langle l, \overline{\partial}_E(e)\rangle,\qquad \forall l\in \Gamma(L_{-}), e\in \Gamma(E),\] 
where $\rho: T_{\mathbbm{C}} M\oplus T^*_{\mathbbm{C}} M\to T_{\mathbbm{C}} M$ is the projection.  
This is well-defined since for any $\xi\in \Gamma(L_{-})\cap \Gamma(T^*_{\mathbbm{C}} M)$, by definition, we have 
$\langle \xi,\overline{\partial}_E e\rangle =0$.
Consider the Lie pair $(T_{\mathbbm{C}} M,\rho(L_{-}))$.
Observe that \[(\rho(L_{-}))^\perp=L_{-}\cap T_{\mathbbm{C}}^* M=G^*M.\] We then obtain the Atiyah class  \[\alpha\in \mathrm{H}^1(\rho(L_{-}),G^*M\otimes \End(E))\]
of the Lie pair $(T_{\mathbbm{C}} M,\rho(L_{-}))$ with the $\rho(L_{-})$-module $E$.

\begin{Thm}
The Atiyah class for the above Lie pair vanishes if and only if there exists a generalized holomorphic connection on the generalized  holomorphic vector bundle $E$.
\end{Thm}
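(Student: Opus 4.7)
The plan is to identify generalized holomorphic connections $D$ on $E$ with $\rho(L_-)$-compatible $T_{\mathbbm{C}}M$-connections $\hat\nabla$ on $E$ extending the flat $\rho(L_-)$-module $\nabla$. By the Lie pair characterization recalled just above, together with the identification $A^\perp=G^*M$ for $A=\rho(L_-)$, the existence of such an $\hat\nabla$ is exactly the vanishing of $\alpha$. Thus the theorem reduces to passing between $D$ and $\hat\nabla$ in both directions.

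For $\alpha=0\Rightarrow\exists D$, I will take an $A$-compatible extension $\hat\nabla$ and set $Ds:=\hat\nabla s$ on local generalized holomorphic sections. Since $\hat\nabla_a s=\nabla_a s=0$ for all $a\in \rho(L_-)$, $Ds$ takes values in $A^\perp\otimes E=G^*M\otimes E$. Moreover, $A$-compatibility applied to $\hat\nabla_a s=0$ yields
\[(\nabla_a\hat\nabla s)(X)=\hat\nabla_a\hat\nabla_X s-\hat\nabla_{[a,X]}s=\hat\nabla_X\hat\nabla_a s=0,\]
so $Ds$ is in fact generalized holomorphic, once one checks that the natural Lie-derivative $\rho(L_-)$-action on $T^*_{\mathbbm{C}}M\otimes E$ restricts to the generalized holomorphic module structure on $G^*M\otimes E$; in canonical coordinates this amounts to $\mathcal{L}_a dz_\lambda=0$ for $a\in \rho(L_-)$. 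The Leibniz rule $D(fs)=d_+f\otimes s+fDs$ for generalized holomorphic $f$ then follows from that of $\hat\nabla$, using that $d_-f=0$ implies $df=d_+f\in G^*M$.

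Conversely, given $D$, I will pick a local generalized holomorphic frame $\{e_\alpha\}$ of $E$ and write $De_\alpha=\sum_\beta A_{\alpha\beta}\otimes e_\beta$ with $A_{\alpha\beta}$ a generalized holomorphic section of $G^*M$, then declare $\hat\nabla$ to be the smooth $T_{\mathbbm{C}}M$-connection with connection matrix $(A_{\alpha\beta})$ in this frame. The Leibniz rule for $D$ forces the transition law $AC=d_+C+CA'$ under a change of generalized holomorphic frame with transition $C$; since $d_-C=0$ we have $d_+C=dC$, which is the usual connection transition rule, so $\hat\nabla$ is globally well-defined on $E$. Since $A_{\alpha\beta}(a)=0$ for $a\in \rho(L_-)$, $\hat\nabla$ extends $\nabla$. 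Because $\hat\nabla$ extends the flat $\nabla$, $R^{\hat\nabla}$ already vanishes on $\rho(L_-)\wedge \rho(L_-)$, and in canonical coordinates $T_{\mathbbm{C}}M=GM\oplus \rho(L_-)$ locally, so $A$-compatibility reduces to checking $R^{\hat\nabla}(a,X)=a(A(X))-A([a,X])=0$ for $a\in \rho(L_-)$ and $X\in GM$. Both terms collapse to $\sum_\lambda A_{\lambda,\cdot}\,a(X_\lambda)$, using that $A_{\alpha\beta}$ and the components of a generalized holomorphic $X$ are killed by $a$, while $[a,X]\in \rho(L_-)\subset \ker A$.

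The main obstacle is the globalization step in the converse direction. A generalized holomorphic connection is only a sheaf map obeying a $d_+$-Leibniz rule, whereas a $T_{\mathbbm{C}}M$-connection requires the full $d$-Leibniz rule on all smooth sections. Reconciling the two hinges on the identity $dC=d_+C$ for generalized holomorphic transition matrices $C$, which is exactly $d_-C=0$. Once $\hat\nabla$ is globally defined, the $A$-compatibility check becomes a transparent coordinate calculation enabled by the generalized Darboux theorem in the regular setting.
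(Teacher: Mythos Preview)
Your argument is essentially correct, but it follows a genuinely different route from the paper's. The paper does not construct the dictionary between generalized holomorphic connections $D$ and $A$-compatible $T_{\mathbbm{C}}M$-connections $\hat\nabla$ by hand. Instead it invokes \cite[Proposition~2.3 and Theorem~2.5]{CSX}: writing $\mathcal{E}$ for the sheaf of $\rho(L_-)$-flat sections of $E$ and $\mathcal{P}$ for the sheaf of vector fields normalizing $\rho(L_-)$, those results say the Atiyah class vanishes iff there is a $T_{\mathbbm{C}}M$-connection with $\nabla_{\mathcal{P}}\mathcal{E}\subset\mathcal{E}$. The paper then identifies $\mathcal{E}$ with generalized holomorphic sections of $E$ and $\mathcal{P}$ with generalized holomorphic vector fields plus $\Gamma(\rho(L_-))$, and finishes with Lemma~\ref{perserve}. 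Your approach is more self-contained: you exhibit the $\hat\nabla\leftrightarrow D$ correspondence explicitly via local generalized holomorphic frames and the identity $dC=d_{+}C$ for generalized holomorphic transition matrices, and you check $A$-compatibility by a direct curvature computation. The paper's route is shorter but leans on an external reference; yours makes the mechanism visible and in particular explains transparently why $D$, defined only as a sheaf map with a $d_{+}$-Leibniz rule, extends to an honest smooth connection.

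Two small points of presentation. First, your final sentence is a bit tangled: the two terms $a(A(X))$ and $A([a,X])$ do not both ``collapse to $\sum_\lambda A_{\lambda}\,a(X_\lambda)$''; rather, for $X$ generalized holomorphic the first vanishes because $a(A_\lambda)=a(X_\lambda)=0$, and the second vanishes because $[a,X]\in\rho(L_-)$ (this last inclusion uses $a(X_\lambda)=0$). Since $R^{\hat\nabla}$ is tensorial in both arguments it is cleanest to simply take $a$ and $X$ to be coordinate vector fields, where $[a,X]=0$ and $a(A_\lambda)=0$ give the result immediately. Second, you assert $T_{\mathbbm{C}}M=GM\oplus\rho(L_-)$ locally without justification; this holds in the regular case (dimensions add up and $GM\cap\rho(L_-)\subset\ker\varepsilon\cap\Delta_{\mathbbm{C}}=0$ since $\mathrm{Im}\,\varepsilon$ is nondegenerate on $\Delta_{\mathbbm{C}}$), but it deserves a sentence.
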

\begin{proof}
By  \cite[Proposition 2.3]{CSX} and \cite[Theorem 2.5]{CSX}, let 
\begin{eqnarray*} \mathcal{E}(U)&:=&\{e\in \Gamma(U,E),\nabla_a e=0,\forall a\in \Gamma(U,A)\},\\ 
\mathcal{P}(U)&:=&\{p\in \Gamma(U,P),[p,a]\subset \Gamma(U,A),\forall a\in \Gamma(U,A)\}.
\end{eqnarray*}
If the sheaf of sections of $E$ is isomorphic to $C^\infty(M)\otimes_{\mathbbm{R}} \mathcal{E}$ and the sheaf of sections of $P$ is isomorphic to $C^\infty(M)\otimes_{\mathbbm{R}} \mathcal{P}$, then
the Atiyah class vanishes if and only if $\nabla_{\mathcal{P}}\mathcal{E}\subset \mathcal{E}$.

 In our case, it is not hard to observe that $\mathcal{E}$ is the space of local generalized holomorphic sections of $E$ and 
and $\mathcal{P}$ is the space of generalized holomorphic vector fields  and sections of $\rho(L_-)$ on $M$. By Lemma \ref{perserve}, we obtain the result.
\end{proof}
\begin{Ex}
Let $E$ be a generalized holomorphic vector bundle over a holomorphic Poisson manifold $(M,\pi)$. The Atiyah class vanishes if and only if there exists a holomorphic connection $D$  on $E$ such that $D_{X}=0$ for any Hamiltonian vector field $X$ on $M$. 
In other words, the connection $1$-form takes values in the kernel of $\pi$.

This is different from the notion of the twisted Atiyah class of a holomorphic vector bundle $E$ on a holomorphic Poisson manifold $M$ introduced in \cite{CLX}, which vanishes if and only if there is a holomorphic $(T^*M)^{1,0}$-connection on $E$. Here $(T^*M)^{1,0}$ is the Lie algebroid associated with the holomorphic Poisson structure on $M$. 
\end{Ex}

\begin{Rm}
A recent work that draws our attention is the Atiyah class for a Manin pair \cite{BP}. A Manin pair is a Courant algebroid $P$ with a Dirac structure $A\subset P$.  Given a Manin pair $(P,A)$ and a predual vector bundle $B$ with an $A$-Dorfman action, the authors constructed an Atiyah class, which is in the first cohomology $\mathrm{H}^1(A,A^\perp\otimes \End(B))$. 
This Atiyah class vanishes if and only if there exists an $P$-compatible $A$-Dorfman connection on $B$.


For our case, we have a Manin pair $(T_{\mathbbm{C}} M\oplus T_{\mathbbm{C}}^*M, L_-)$ and a generalized holomorphic vector bundle $E$, which is an $L_-$-module.  Since $E$ is in general not a predual bundle of $L_-$, our case can not fall into the framework of \cite{BP}. The relationship between them is left to our future work.  It is desirable to find the method to define the Atiyah class for a Manin pair $(P,A)$ with an $A$-module $E$.
\end{Rm}

\begin{Rm}
For holomorphic vector bundles, the Atiyah classes, defined by using the $\rm{\check{C}}$ech cohomology, the extension class of the first-jet bundle and the Lie pair, are exactly the same by the isomorphisms of their respective coholomogy groups. See \cite{CSX,H}. For the generalized holomorphic vector bundles, the three methods to define the Atiyah class are introduced. The relations of the three cohomology groups are not clear, which will be studied in the future.

\end{Rm}

\newpage

\end{document}